\documentclass[11pt]{amsart}

\usepackage[T1]{fontenc}
\usepackage{lmodern}
\usepackage{amsmath,amssymb,amsthm,mathtools}
\usepackage{microtype}
\usepackage{enumitem}
\usepackage[hidelinks]{hyperref}
\usepackage[a4paper,margin=1.15in]{geometry}
\hypersetup{
  pdftitle={Transversal Holder Criteria and Dini-Zygmund Endpoint Regularity for Hyperbolic Harmonic Mappings},
  pdfauthor={Hong-Ping Li and Suling Tan},
  pdfsubject={Hyperbolic harmonic mappings and endpoint regularity}
}

\makeatletter
\@namedef{subjclassname@2020}{\textup{2020} Mathematics Subject Classification}

\makeatother

\numberwithin{equation}{section}

\newtheorem{theorem}{Theorem}[section]
\newtheorem{proposition}[theorem]{Proposition}
\newtheorem{lemma}[theorem]{Lemma}
\newtheorem{corollary}[theorem]{Corollary}
\newtheorem{remark}[theorem]{Remark}

\theoremstyle{definition}

\newcommand{\R}{\mathbb R}
\newcommand{\Hh}{\mathbb H}

\newcommand{\F}{\mathcal F}
\newcommand{\Pcal}{\mathcal P}
\newcommand{\Vcal}{\mathcal V}
\newcommand{\Mcal}{\mathcal M}
\newcommand{\Dcal}{\mathcal D}
\newcommand{\Lip}{\operatorname{Lip}}

\newcommand{\dd}{\,\mathrm{d}}
\newcommand{\norm}[1]{\left\lVert #1\right\rVert}

\title[Transversal regularity for hyperbolic harmonic mappings]
{Transversal H\"older Criteria and Dini--Zygmund Endpoint Regularity for Hyperbolic Harmonic Mappings}

\author{Hong-Ping Li}
\address{School of Mathematical Sciences, Huaqiao University, Quanzhou 362021, China}
\email{lhp306@hqu.edu.cn}

\author{Suling Tan}
\address{School of Mathematical Sciences, Huaqiao University, xiamen 361021, China}
\email{sltan@hqu.edu.cn}

\subjclass[2020]{Primary 31C05, 31B05; Secondary 42B35, 46E35, 26A16}
\keywords{hyperbolic harmonic mapping, Poisson--Szeg\H{o} kernel, transversal H\"older condition, modulus condition, Zygmund class, Besov space}

\begin{document}

\begin{abstract}
Let $n\ge3$ and let $u$ be a bounded mapping on the upper half-space that is harmonic for the real hyperbolic Laplacian.  For $0<\alpha<1$, uniform $\alpha$-H\"older continuity of $u$ on the vertical lines is shown to be quantitatively equivalent to global $\alpha$-H\"older continuity.  For real-valued $u$, the vertical approach of $|u|$ to its boundary modulus already suffices.  Both statements fail when $\alpha=1$: a lacunary trace produces a hyperbolic harmonic extension that is vertically Lipschitz but not globally Lipschitz.  Endpoint conclusions are recovered under a Dini--Zygmund, equivalently $B_{\infty,1}^{1}$, summability condition.  The proofs combine the Fourier--Bessel multiplier of the hyperbolic Poisson kernel with inverse approximation and critical Besov estimates.
\end{abstract}

\maketitle

\section{Background and main results}

\subsection{Modulus criteria in analytic function theory}

Let $0<\alpha\le1$ and let $E$ be a subset of a Euclidean space.  We write $\Lambda_\alpha(E)$ for the class of functions satisfying
\[
  |f(x)-f(y)|\le C|x-y|^\alpha,
  \qquad x,y\in E.
\]
For $0<\alpha<1$ this is the usual H\"older class, whereas $\Lambda_1(E)$ is the Lipschitz class.  The distinction between these ranges is not cosmetic.  At noninteger smoothness the first difference is an exact measure of regularity; at the integer index one encounters second differences and the larger Zygmund class.

The modern modulus approach begins with Dyakonov's work on analytic functions in the unit disk.  For an analytic function $f$ on $\mathbb D$, continuous on $\overline{\mathbb D}$, he compared four quantities: the global H\"older seminorm of $f$, the corresponding seminorm of $|f|$, and two mixed boundary-interior quantities.  The latter combine
\[
 \sup_{\zeta\ne\eta\in\partial\mathbb D}
 \frac{\bigl||f(\zeta)|-|f(\eta)|\bigr|}{|\zeta-\eta|^\alpha}
\]
with either
\[
 \sup_{z\in\mathbb D}
 \frac{P[|f|](z)-|f(z)|}{(1-|z|)^\alpha}
\]
or
\[
 \sup_{0<r<1,\ \zeta\in\partial\mathbb D}
 \frac{\bigl||f(r\zeta)|-|f(\zeta)|\bigr|}{(1-r)^\alpha}.
\]
Dyakonov proved that all four quantities are equivalent for $0<\alpha<1$ \cite{Dyakonov1997}.  The theorem is stronger than a boundary extension result: it reconstructs the regularity of a complex-valued function from the regularity of its modulus and a one-parameter approach condition.  Pavlovi\'c later gave a shorter proof and clarified the mechanism of the equivalence \cite{Pavlovic1999,PavlovicBook}.  Dyakonov subsequently extended the method to more general moduli and mappings of finite distortion \cite{Dyakonov2004}.

The endpoint $\alpha=1$ behaves differently.  A bound on second differences of the form
\[
  |g(x+h)-2g(x)+g(x-h)|\le C|h|
\]
defines the Zygmund class $\Lambda_*$, or $B_{\infty,\infty}^{1}$ in Besov notation.  This class strictly contains the Lipschitz class.  The gap is already visible in lacunary trigonometric series and is central in Zygmund's classical theory \cite{Zygmund1945,Zygmund2002}.  An approximation estimate of order one therefore need not yield a bounded first derivative.  Some additional summability across scales is required.

A second classical viewpoint is the Hardy--Littlewood principle; see, for example, \cite{Stein1970}.  On a sufficiently regular domain, an estimate
\[
 |\nabla u(x)|\le C\,\delta(x)^{\alpha-1},
 \qquad \delta(x)=\operatorname{dist}(x,\partial\Omega),
\]
usually implies $u\in\Lambda_\alpha(\Omega)$ when $0<\alpha<1$.  Nolder and Oberlin developed versions for general moduli, while Hinkkanen studied continuity moduli of harmonic functions \cite{NolderOberlin,Hinkkanen}.  Gehring--Martio and Lappalainen identified extension-domain conditions under which the local gradient estimate globalizes \cite{GehringMartio,Lappalainen}.  This principle underlies much of the Euclidean work discussed next, but it does not by itself explain how a single directional estimate controls the entire gradient.

\subsection{Euclidean harmonic functions and transversal information}

Pavlovi\'c found a real harmonic counterpart of Dyakonov's theorem on the unit ball \cite{Pavlovic2007}; standard harmonic-function background may be found in \cite{AxlerBourdonRamey}.  Let $U$ be real-valued and harmonic in $\mathbb B^N$, continuous on the closed ball.  For $0<\alpha<1$, global $\alpha$-H\"older continuity of $U$ is equivalent to either of two modulus conditions: $|U|$ is $\alpha$-H\"older on the sphere, or $|U|$ satisfies the uniform radial estimate
\[
  \bigl||U(r\zeta)|-|U(\zeta)|\bigr|
  \le C(1-r)^\alpha.
\]
The real-valued assumption supplies a simple but effective zero-crossing argument.  If two values have opposite signs, continuity produces a zero on the connecting interval; estimates for the modulus then control the actual difference.

A related line of research asks whether regularity along curves transverse to the boundary determines full regularity.  Ravisankar studied such conditions on smooth domains and showed, under geometric hypotheses on the transverse curves, that transversal Lipschitz information can control harmonic functions globally \cite{Ravisankar2013}.  The geometry matters: a normal family must enter the domain at a quantitative angle, and local estimates must be propagated to tangential directions.

Markovi\'c recently obtained particularly clean results on the Euclidean upper half-space \cite{Markovic2025half}.  For bounded harmonic functions on
\[
  \R^N_+=\R^{N-1}\times(0,\infty),
\]
uniform $\alpha$-H\"older continuity on the vertical lines is quantitatively equivalent to global $\alpha$-H\"older continuity when $0<\alpha<1$.  He also proved equivalence of the global seminorm, the boundary seminorm, and the seminorm restricted to vertical lines for bounded vector-valued harmonic mappings.  For real-valued functions he combined this with the zero-crossing argument to obtain a modulus theorem.

The proof proceeds in three stages.  A Taylor argument first controls the vertical derivative.  Since a partial derivative of a Euclidean harmonic function remains harmonic, a local Schwarz estimate converts this one-directional bound into a full gradient estimate.  The upper half-space is then used as a $\Lambda_\alpha$-extension domain.  The constants deteriorate as $\alpha\uparrow1$, reflecting the integration of $t^{\alpha-2}$ from a point to infinity.  This degeneration already suggests that the endpoint cannot be obtained by a formal limit.

The same author extended the transversal theorem to strict epigraphs of Lipschitz functions \cite{Markovic2025epi}.  If
\[
  E_\Psi=\{(x',x_N):x_N>\Psi(x')\}
\]
and $\Psi$ is Lipschitz, vertical translation remains in $E_\Psi$ and increases the distance to the boundary at a controlled linear rate.  This geometric fact replaces the exact identity $\operatorname{dist}((x',x_N),\partial\R^N_+)=x_N$.  Once the vertical derivative is bounded by a power of the boundary distance, mixed derivative estimates and integration along the vertical flow recover the remaining components of the gradient.  The method also explains why general bounded Lipschitz domains are more difficult: there is no global direction along which one may escape to infinity and force the gradient to vanish.

These Euclidean arguments do not pass unchanged to the hyperbolic equation.  Their pivotal identity is
\[
  \Delta(\partial_{x_N}U)=0.
\]
For the real hyperbolic Laplacian the corresponding normal derivative solves an inhomogeneous equation.  A different tool is needed.

\subsection{Hyperbolic harmonic functions and boundary regularity}

Write $d=n-1$ and consider the upper-half-space model
\[
  \Hh^n=\R^d\times(0,\infty),
  \qquad n\ge3.
\]
The real hyperbolic Laplacian is
\[
  \Delta_hu(x,y)
  =y^2\bigl(\Delta_xu(x,y)+\partial_y^2u(x,y)\bigr)
   -(d-1)y\partial_yu(x,y).
\]
A vector-valued mapping is called hyperbolic harmonic when each component is annihilated by $\Delta_h$.  The restriction $n\ge3$, equivalently $d\ge2$, will be essential at the Lipschitz endpoint: the first moment of the normal derivative of the hyperbolic Poisson kernel is then integrable.

Boundary values, Hardy spaces, and smoothness classes for hyperbolic harmonic functions were developed by Jaming and by Grellier--Jaming \cite{Jaming1999,GrellierJaming1999}.  In particular, the Poisson--Szeg\H{o} extension maps suitable boundary Lipschitz spaces into the corresponding spaces in the hyperbolic ball.  Stoll's monograph gives a systematic potential-theoretic treatment of the real hyperbolic ball and its Poisson and Green kernels \cite{Stoll2016}.

In the ball model the invariant Poisson kernel is
\[
 P_h(x,\xi)=\left(\frac{1-|x|^2}{|x-\xi|^2}\right)^{n-1},
 \qquad x\in\mathbb B^n,\quad \xi\in\mathbb S^{n-1}.
\]
Its exponent $n-1$ is larger than that of the Euclidean Poisson kernel.  Near the boundary this produces stronger tangential decay and is precisely why a first kernel moment is finite when $n\ge3$.

Chen, Huang, Rasila and Wang studied the nonhomogeneous equation on $\mathbb B^n$ \cite{ChenEtAl2018}.  Under a natural weighted integrability condition they proved the representation
\[
  u=P_h[\varphi]-G_h[\psi]
\]
for solutions of $\Delta_hu=\psi$.  They also showed that a boundary datum satisfying a fast-majorant condition has a Poisson--Szeg\H{o} extension with the same modulus of continuity.  In particular, when $n\ge3$, Lipschitz boundary data have globally Lipschitz hyperbolic harmonic extensions.  This endpoint statement contrasts with the Euclidean Poisson extension in dimension two, where a Lipschitz trace may have an unbounded interior gradient.  Mateljevi\'c and Mutavd\v{z}i\'c later considered measure data, alternative assumptions on the source term, local H\"older estimates, and smoothness up to the boundary \cite{MateljevicMutavdzic2022}.

For the homogeneous equation, Chen et al. obtain a gradient estimate of the form
\[
 \|D P_h[\varphi](x)\|
 \le C_n\frac{\omega(1-|x|)}{1-|x|}
\]
when the boundary increments are controlled by a suitable majorant $\omega$.  Choosing $\omega(t)=t^\alpha$ gives the expected H\"older estimate, and $\omega(t)=t$ gives a uniform gradient bound.  Their two-dimensional counterexample shows why the condition $n\ge3$ is substantive in the Lipschitz statement.  In the nonhomogeneous problem, a condition such as $|\psi(x)|\le M(1-|x|^2)$ makes the Green potential Lipschitz, so regularity of the full solution follows by separating its Poisson and Green parts.

The preceding results start from a regular boundary function.  Our question is inverse in nature.  Suppose that a bounded hyperbolic harmonic mapping is known to be H\"older only on the vertical lines.  Does this force its boundary trace to be H\"older?  If so, the boundary-to-interior theory may then be applied.  The issue is not settled by differentiating the equation, because
\[
  \Delta_h(\partial_yu)\ne0
\]
in general.  We instead use the hyperbolic Poisson kernel as an approximation family and recover boundary smoothness through its Fourier multiplier.

\subsection{The subcritical and critical mechanisms}

The two parts of the paper rely on different function-space facts.

For $0<\alpha<1$, a bounded function $f$ belongs to $C^{0,\alpha}(\R^d)$ precisely when its hyperbolic Poisson approximants satisfy
\[
  \norm{\Pcal_y*f-f}_{L^\infty}=O(y^\alpha).
\]
This is an inverse approximation theorem.  On a dyadic annulus the multiplier $1-m_d(y|\xi|)$ is invertible when $y$ is comparable with the reciprocal frequency.  The approximation estimate therefore yields
\[
  \norm{\Delta_jf}_{L^\infty}\lesssim2^{-j\alpha},
\]
which is the $B_{\infty,\infty}^{\alpha}=C^{0,\alpha}$ condition.  The vertical H\"older hypothesis produces a uniform trace and controls the rate of convergence to it.  This is the mechanism behind Theorems \ref{thm:A} and \ref{thm:B}.

At $\alpha=1$, the same scale-by-scale argument gives only
\[
  \sup_j2^j\norm{\Delta_jf}_{L^\infty}<\infty.
\]
That is the Zygmund condition $f\in B_{\infty,\infty}^{1}$, not Lipschitz regularity.  The failure is genuine even though hyperbolic Poisson extensions preserve Lipschitz boundary data.  The distinction is important: a vertically Lipschitz solution need not have a Lipschitz trace.  Proposition \ref{prop:counterexample} constructs such a solution from a uniformly convergent lacunary series.

To restore the endpoint, we impose summability rather than boundedness of the dyadic contributions:
\[
  \sum_{j\ge0}2^j\norm{\Delta_jf}_{L^\infty}<\infty.
\]
This is the critical Besov condition $B_{\infty,1}^{1}$.  In difference form it becomes a Dini--Zygmund integral.  Similar second-difference phenomena occur in the work of Wang and Zhu on $\omega$-Lipschitz continuity for Poisson equations \cite{WangZhuPreprint}.  They distinguish fast majorants, governed by a local Dini integral, from slow majorants, governed by a tail integral.  In the fast regime, a boundary slow-function condition expressed through symmetric second differences is needed to recover the interior modulus.  Their examples show that ordinary boundary Lipschitz continuity alone does not control the Euclidean Poisson extension at the endpoint.  The present endpoint condition is tailored to an inverse problem: it upgrades the Zygmund regularity forced by vertical Lipschitz control to a summable class embedded in $\Lip$.

Thus the paper has two genuinely different components.  Below the endpoint, one bounded estimate at every scale is enough.  At the endpoint, scales must be summed.

\subsection{Statements of the main results}

For a continuous mapping $u:\overline{\Hh^n}\to\R^m$ and $0<\alpha\le1$, define
\[
 [u]_{\alpha,\Hh^n}
 =\sup_{X\ne Y\in\overline{\Hh^n}}
   \frac{|u(X)-u(Y)|}{|X-Y|^\alpha}
\]
and
\[
 \Vcal_\alpha(u)
 =\sup_{x\in\R^d}\sup_{s\ne t>0}
   \frac{|u(x,s)-u(x,t)|}{|s-t|^\alpha}.
\]
For a real-valued mapping with trace $f(x)=u(x,0)$, put
\[
 \Mcal_\alpha(u)
 =\sup_{x\in\R^d}\sup_{y>0}
   \frac{\bigl||u(x,y)|-|f(x)|\bigr|}{y^\alpha}.
\]

The first theorem is the hyperbolic counterpart of Markovi\'c's transversal criterion.

\begin{theorem}[Transversal H\"older criterion]\label{thm:A}
Let $n\ge3$, $m\ge1$, and $0<\alpha<1$.  For a bounded hyperbolic harmonic mapping $u:\Hh^n\to\R^m$, the following assertions are equivalent:
\begin{enumerate}[label=\textup{(\roman*)}]
\item $u$ has an $\alpha$-H\"older continuous extension to $\overline{\Hh^n}$;
\item $\Vcal_\alpha(u)<\infty$.
\end{enumerate}
Under condition \textup{(ii)}, the mapping has a unique uniform boundary trace $f\in C^{0,\alpha}(\R^d;\R^m)$, and
\[
 \Vcal_\alpha(u)
 \le [u]_{\alpha,\Hh^n}
 \le C_{d,\alpha}\Vcal_\alpha(u).
\]
The constant is independent of the target dimension $m$.
\end{theorem}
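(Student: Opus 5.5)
The implication (i)$\Rightarrow$(ii) and the inequality $\Vcal_\alpha(u)\le[u]_{\alpha,\Hh^n}$ are immediate, since vertical pairs $(x,s),(x,t)$ are special pairs of points in $\overline{\Hh^n}$. All the work is in (ii)$\Rightarrow$(i) with a constant depending only on $d$ and $\alpha$. I will carry it out in three steps: construct the trace, show the trace is H\"older via the inverse approximation theorem for the hyperbolic Poisson kernel, and then transfer H\"older regularity from the trace to the interior using the boundary-to-interior estimate. Since all constants come from scalar estimates applied componentwise after pairing with a unit vector $e\in\R^m$, the constant will not depend on $m$.

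\emph{Trace construction.} Assume $\Vcal_\alpha(u)<\infty$. Then for each $x$ the family $u(x,y)$ is Cauchy as $y\downarrow0$, uniformly in $x$, because
\[
|u(x,s)-u(x,t)|\le\Vcal_\alpha(u)|s-t|^\alpha .
\]
Hence there is a limit $f(x)$ with
\[
\|u(\cdot,y)-f\|_{L^\infty}\le\Vcal_\alpha(u)\,y^\alpha .
\]
Since $u$ is bounded, so is $f$. Next I use the Poisson representation of bounded hyperbolic harmonic functions, $u(\cdot,y)=\Pcal_y*g$ with $g\in L^\infty$ (Jaming, Grellier--Jaming). Because $\Pcal_y*g\to g$ a.e., we get $g=f$ a.e., and therefore
\[
\|\Pcal_y*f-f\|_{L^\infty}\le\Vcal_\alpha(u)\,y^\alpha .
\]
The trace $f$ is continuous, being a uniform limit of continuous functions, and it is unique.

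\emph{Trace regularity.} Apply the inverse approximation theorem described in the introduction. With a Littlewood--Paley piece $\Delta_j$, choose $y=2^{-j}$ and write
\[
\Delta_jf=\bigl(1-m_d(y|\xi|)\bigr)^{-1}\widetilde\Delta_j(f-\Pcal_y*f).
\]
This uses that $1-m_d(r)\ne0$ for $r\in[1/2,2]$; indeed $0<m_d(r)<1$ for $r>0$, since $m_d$ is the Fourier--Bessel multiplier of a positive kernel of mass one that is not a point mass. The resulting multiplier is smooth and compactly supported on the annulus, and it is dilation-invariant in $j$, so its kernel has an $L^1$ norm independent of $j$. This gives
\[
\|\Delta_jf\|_\infty\lesssim2^{-j\alpha}\Vcal_\alpha(u).
\]
For the low-frequency part I only need an estimate of a seminorm, so I control differences. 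The bound $\|\Delta_jf\|_\infty\lesssim 2^{-j\alpha}V$ for $j\ge1$ already gives $[f]_\alpha\lesssim V$ through the standard estimate
\[
|f(x)-f(x')|\le\sum_j\min\bigl(2^j|x-x'|,2\bigr)\|\Delta_jf\|_\infty ,
\]
provided the low-frequency block is handled the same way. For that I treat $S_0f$ by comparing at large $y$ using the same identity over all scales $y\ge1$, or more simply by letting the dyadic decomposition run over $j\in\mathbb Z$ (homogeneous Besov), which avoids $\|f\|_\infty$. Summing the series converges because $0<\alpha<1$, and yields
\[
[f]_{\alpha,\R^d}\le C_{d,\alpha}\Vcal_\alpha(u).
\]
This step is the main obstacle. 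I need a homogeneous version so that the bound involves only $\Vcal_\alpha$ and not $\sup|u|$, and I need to verify that $m_d$ is genuinely bounded away from $1$ on compact subsets of $(0,\infty)$.

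\emph{Extension to the interior.} Since $u=P_h[f]$ and $f\in C^{0,\alpha}$, the gradient estimate of Chen et al.\ with $\omega(t)=t^\alpha$ (translated to the half-space, or proved directly from the kernel) gives
\[
|\nabla u(x,y)|\le C\,[f]_\alpha\,y^{\alpha-1}.
\]
The half-space is a $\Lambda_\alpha$-extension domain (Gehring--Martio), so the Hardy--Littlewood principle yields $[u]_{\alpha,\Hh^n}\le C[f]_\alpha$. Concretely, for two points $X,Y$ at distance $h$, move each vertically up by $h$, using $\Vcal_\alpha$ or the gradient bound on these vertical segments, and connect the raised points horizontally, where $|\nabla u|\lesssim h^{\alpha-1}$. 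Combining the three steps gives $[u]_{\alpha,\Hh^n}\le C_{d,\alpha}\Vcal_\alpha(u)$.
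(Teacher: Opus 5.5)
Your proposal is correct and follows the paper's proof essentially step for step: the uniform trace from the vertical Cauchy estimate (valid for all $y>0$, which is what gives $\norm{\Delta_jf}_{L^\infty}\lesssim 2^{-j\alpha}\Vcal_\alpha(u)$ for every $j\in\mathbb Z$ and so avoids any $\norm{f}_{L^\infty}$ term), the bounded Poisson representation, inverse approximation through the invertible annular multiplier $\psi/(1-m_d)$, and then a boundary-to-interior estimate. The only variation is in the last step, and it is minor: the paper controls horizontal increments by the convolution structure and vertical increments by the kernel bound on $\partial_y u$ with a case split, while your raise-and-connect path uses the full gradient bound $|\nabla u|\lesssim [f]_{C^{0,\alpha}}y^{\alpha-1}$; in fact, since $\Vcal_\alpha(u)$ is a hypothesis, the bound $|u(x,s)-u(x',t)|\le \Vcal_\alpha(u)|s-t|^\alpha+[f]_{C^{0,\alpha}}|x-x'|^\alpha$ already finishes the argument without any gradient estimate.
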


For real-valued mappings, the actual vertical difference can be replaced by a difference of moduli.

\begin{theorem}[Radial modulus criterion]\label{thm:B}
Let $n\ge3$ and $0<\alpha<1$.  Suppose that $u:\Hh^n\to\R$ is bounded and hyperbolic harmonic, extends continuously to $\overline{\Hh^n}$, and has boundary trace $f$.  Then
\[
 [u]_{\alpha,\Hh^n}<\infty
 \quad\Longleftrightarrow\quad
 \Mcal_\alpha(u)<\infty.
\]
Quantitatively,
\[
 \Mcal_\alpha(u)
 \le [u]_{\alpha,\Hh^n}
 \le C_{d,\alpha}\Mcal_\alpha(u).
\]
\end{theorem}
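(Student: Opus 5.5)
The plan is to reduce Theorem~\ref{thm:B} to Theorem~\ref{thm:A} through a zero-crossing argument along vertical lines, as in Pavlovi\'c and Markovi\'c. The lower bound is immediate. For any $x$ and $y>0$, the reverse triangle inequality gives
\[
\bigl||u(x,y)|-|f(x)|\bigr|\le|u(x,y)-u(x,0)|\le[u]_{\alpha,\Hh^n}\,y^\alpha .
\]
Hence $\Mcal_\alpha(u)\le[u]_{\alpha,\Hh^n}$. For the upper bound it suffices to prove
\[
\Vcal_\alpha(u)\le C_\alpha\Mcal_\alpha(u),
\]
since Theorem~\ref{thm:A} then yields $[u]_{\alpha,\Hh^n}\le C_{d,\alpha}\Vcal_\alpha(u)$. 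Continuity of $u$ up to the boundary is given, so the trace in Theorem~\ref{thm:A} coincides with $f$.

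The first step is to control the difference $u(x,y)-f(x)$ itself, not just the difference of moduli. Fix $x$ and $y>0$.
\begin{itemize}[leftmargin=*]
\item If $u(x,y)$ and $f(x)$ have the same sign (or one vanishes), then $|u(x,y)-f(x)|=\bigl||u(x,y)|-|f(x)|\bigr|\le\Mcal_\alpha y^\alpha$.
\item If they have opposite signs, the function $t\mapsto u(x,t)$ is continuous on $[0,y]$, so it has a zero at some $t_0\in(0,y)$. Applying the definition of $\Mcal_\alpha$ at $(x,t_0)$ gives $|f(x)|\le\Mcal_\alpha t_0^\alpha\le\Mcal_\alpha y^\alpha$. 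Therefore $|u(x,y)|\le|f(x)|+\Mcal_\alpha y^\alpha\le2\Mcal_\alpha y^\alpha$, and so $|u(x,y)-f(x)|\le3\Mcal_\alpha y^\alpha$.
\end{itemize}
In all cases $|u(x,y)-f(x)|\le3\Mcal_\alpha(u)\,y^\alpha$, uniformly in $x$. This is a one-point vertical estimate anchored at the boundary.

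Next I convert this boundary-anchored estimate into a full vertical H\"older bound. The cheap route is the following. For $0<s<t$ with $t-s\ge s$, we have $t\le2(t-s)$, so
\[
|u(x,t)-u(x,s)|\le3\Mcal_\alpha\,(t^\alpha+s^\alpha)\le C\Mcal_\alpha(t-s)^\alpha .
\]
The hard case is $t-s<s$, which is where the main obstacle lies. Here I would not argue along vertical lines at all. Instead I would use the subcritical mechanism described in the introduction. The estimate $\|u(\cdot,y)-f\|_{L^\infty}\le3\Mcal_\alpha y^\alpha$, together with the representation $u(\cdot,y)=\Pcal_y*f$ for bounded hyperbolic harmonic functions with continuous trace, is exactly the inverse approximation hypothesis $\|\Pcal_y*f-f\|_\infty=O(y^\alpha)$. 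By that inverse approximation theorem (the step underlying Theorem~\ref{thm:A}), it yields
\[
f\in C^{0,\alpha}\quad\text{with}\quad[f]_\alpha\le C_{d,\alpha}\Mcal_\alpha .
\]
Then the boundary-to-interior estimate (the Chen et al.\ gradient bound $|\nabla u(x,y)|\lesssim[f]_\alpha y^{\alpha-1}$ in the upper half-space, followed by the Hardy--Littlewood integration along the extension domain $\Hh^n$) gives $[u]_{\alpha,\Hh^n}\le C_{d,\alpha}[f]_\alpha\le C_{d,\alpha}\Mcal_\alpha$ directly.

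If the proof of Theorem~\ref{thm:A} is organized as a chain ``uniform vertical approach rate $\Rightarrow$ trace in $C^{0,\alpha}$ $\Rightarrow$ global H\"older,'' I would simply invoke its later links. The step I expect to need care is checking that those links only use the one-point rate $\sup_x|u(x,y)-f(x)|\lesssim y^\alpha$ and not the full $\Vcal_\alpha$. The only genuinely new ingredient in Theorem~\ref{thm:B} is the sign/zero-crossing argument in the first step.
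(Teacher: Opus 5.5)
Your proof is correct and is essentially the paper's argument. The zero-crossing step is the paper's Lemma~\ref{lem:sign}, giving $\norm{u(\cdot,y)-f}_{L^\infty}\le 3\Mcal_\alpha(u)\,y^\alpha$; the paper then applies Lemma~\ref{lem:inverse} to $u(\cdot,y)=\Pcal_y*f$ and finishes with Lemma~\ref{lem:extension}, which is proved directly from the half-space kernel, so you need not appeal to the ball-model estimate of Chen et al.\ (which, as you cite it, is not stated for the half-space). The detour through $\Vcal_\alpha$ and the case split $t-s\ge s$ can be dropped, since, as you suspected, only the one-point rate enters those later steps.
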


The exponent range in these two theorems is sharp.

\begin{proposition}[Endpoint failure]\label{prop:counterexample}
For every $n\ge3$ there exists a bounded real-valued hyperbolic harmonic function $U$ on $\Hh^n$, continuous on $\overline{\Hh^n}$, such that
\[
 |U(x,s)-U(x,t)|\le C_d|s-t|,
 \qquad x\in\R^d,\quad s,t>0,
\]
but $U$ is not Lipschitz on $\overline{\Hh^n}$.  There is also a bounded positive hyperbolic harmonic function $V$, continuous on the closure, for which
\[
 \bigl||V(x,y)|-|V(x,0)|\bigr|\le C_d y,
\]
while $V$ is not globally Lipschitz.
\end{proposition}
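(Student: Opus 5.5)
The plan is to take $U=\Pcal[f]$, the hyperbolic Poisson extension, of a lacunary trace
\[
 f(x)=\sum_{k\ge1}2^{-k}\cos(2^kx_1),
\]
the Weierstrass-type function in the first coordinate.  This $f$ is bounded and continuous.  It lies in the Zygmund class $B^1_{\infty,\infty}$ but is not Lipschitz, since its difference quotients grow like $\log(1/h)$.  Because $f$ depends only on $x_1$ and each term is a character, the extension is explicit:
\[
 U(x,y)=\sum_k2^{-k}\,m_d(2^ky)\cos(2^kx_1),
\]
where $m_d$ is the Fourier--Bessel multiplier of the hyperbolic Poisson kernel.  Here $m_d(r)$ is a normalized $r^{d/2}K_{d/2}(r)$-type function: $m_d(0)=1$, it is smooth for $r>0$ and decays exponentially.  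The series converges uniformly on $\overline{\Hh^n}$, so $U$ is bounded, continuous up to the boundary, and hyperbolic harmonic.

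The non-Lipschitz property is immediate, because the trace $U(\cdot,0)=f$ is not Lipschitz.

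For the vertical Lipschitz bound we need $\sup_{y>0}|\partial_yU(x,y)|<\infty$.  Differentiating gives
\[
 |\partial_yU|\le\sum_k|m_d'(2^ky)|.
\]
The key kernel fact is that $m_d'$ is bounded near $0$ when $d\ge2$.  Indeed $1-m_d(r)$ behaves like $c\,r^{\min(d,2)}$, possibly with a logarithm when $d=2$; in general $m_d'$ is integrable against $dr/r$ near $0$, in the sense that $|m_d'(r)|\lesssim r^{\beta}$ for some $\beta>0$, possibly up to a logarithmic factor at $d=2$.  This is where $d\ge2$ enters, matching the first-moment remark in the introduction.  Combined with exponential decay at infinity, the dyadic sum
\[
 \sum_k|m_d'(2^ky)|\lesssim\sup_{r>0}\sum_k|m_d'(2^kr)|
\]
is bounded uniformly in $y$: the terms with $2^ky\le1$ form a geometric series with ratio $2^{-\beta}$, and those with $2^ky>1$ are exponentially small.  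This gives $|U(x,s)-U(x,t)|\le C_d|s-t|$.

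\textbf{Main obstacle.}  The main obstacle is verifying the small-$r$ behaviour of $m_d'$.  I would first try the explicit Bessel formula and the series expansion of $K_\nu$ with $\nu=d/2\ge1$.  Alternatively, one can avoid formulas: write
\[
 m_d'(r)=\int\partial_y\Pcal_y(z)\,(iz_1)e^{irz_1}\dd z\big|_{y=1},
\]
whose modulus is at most the first moment of $\partial_y\Pcal$.  That moment is finite for $d\ge2$ because $\Pcal_y(z)\sim y^{d}|z|^{-2d}$, and it even gives $|m_d'|$ bounded.  Boundedness alone does not suffice, so one must also use the cancellation $m_d'(0)=0$ to get the decay $r^\beta$, for instance by bounding $r^{-\beta}|m_d'(r)|$ through a fractional moment $\int|\partial_y\Pcal_1(z)||z|^{1+\beta}\dd z<\infty$ for $\beta<d-1$, via $|e^{irz_1}-1|\le (r|z|)^\beta$.

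For $V$, take $V=2+U$.  It is positive, bounded and continuous on the closure, and not Lipschitz.  Since $V$ is positive, $\bigl||V(x,y)|-|V(x,0)|\bigr|=|U(x,y)-U(x,0)|\le C_dy$ by the vertical bound with $t\to0$, using continuity at the boundary.
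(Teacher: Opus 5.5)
Your argument is essentially the paper's: a lacunary trace in $x_1$, the termwise extension through the multiplier $m_d$, and the dyadic bound $\sup_{r>0}\sum_k|m_d'(2^kr)|<\infty$ (the paper's \eqref{eq:dyadicBessel}) to control $\partial_yU$. The construction of $V$ by a constant shift is also the paper's. Getting the modulus bound for $V$ by letting $t\to0$ in the vertical estimate is a small shortcut that makes the paper's separate use of \eqref{eq:dyadicApprox} unnecessary.

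Two slips need fixing.

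First, the non-Lipschitz claim for the trace is only asserted. With cosines, the paper's test at the origin fails, since there $|f(h)-f(0)|\lesssim h$. You need an actual argument. One option is Parseval: a Lipschitz $f$ would have $\partial_{x_1}f\in L^2$ on a period, yet its Fourier coefficients have modulus $1/2$ at every frequency $\pm2^k$. Alternatively, use sines as the paper does.

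Second, your Fourier formula for $m_d'$ is wrong. The expression $\int\partial_y\Pcal_y(z)\,(iz_1)e^{irz_1}\dd z\big|_{y=1}$ equals $(r\,m_d'(r))'$, not $m_d'(r)$. The correct identity is $m_d'(r)=\int(iz_1)e^{irz_1}\Pcal_1(z)\dd z$. Your cancellation and fractional-moment argument applies to it verbatim, because $\Pcal_1$ has the same $|z|^{-2d}$ decay.
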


We next state the endpoint replacements.  For a bounded continuous function $g:\R^d\to\R^m$, let
\[
 \omega_2(g,t)_\infty
 =\sup_{x\in\R^d}\sup_{|h|\le t}
   |g(x+h)-2g(x)+g(x-h)|
\]
and define the Dini--Zygmund functional
\[
 \Dcal(g)=\int_0^1\frac{\omega_2(g,t)_\infty}{t^2}\dd t.
\]
Its finiteness is the inhomogeneous $B_{\infty,1}^{1}$ condition in second-difference form.  It is stronger than the ordinary Zygmund estimate $\omega_2(g,t)_\infty=O(t)$ and implies that $g$ is Lipschitz.  It is not necessary for every Lipschitz function; corners provide elementary counterexamples.

For a mapping with uniform trace $f$, set
\[
 E_u(t)=\norm{u(\cdot,t)-f}_{L^\infty}.
\]
In the real-valued case define the monotone modulus envelope
\[
 E_{|u|}^{*}(t)
 =\sup_{0<s\le t}
   \bigl\||u(\cdot,s)|-|f|\bigr\|_{L^\infty}.
\]

\begin{theorem}[Endpoint transversal criterion with Dini summability]\label{thm:endpointA}
Let $n\ge3$ and let $u:\Hh^n\to\R^m$ be bounded and hyperbolic harmonic.  Assume that $\Vcal_1(u)<\infty$, and let $f$ be its uniform boundary trace.  If
\begin{equation}\label{eq:intrinsicA}
 \int_0^1\frac{E_u(t)}{t^2}\dd t<\infty,
\end{equation}
then $u\in\Lip(\overline{\Hh^n};\R^m)$ and
\[
 [u]_{\Lip(\overline{\Hh^n})}
 \le C_d\left(
   \norm{f}_{L^\infty}
   +\int_0^1\frac{E_u(t)}{t^2}\dd t
 \right).
\]
Condition \eqref{eq:intrinsicA} is equivalent, up to the $L^\infty$ term, to $\Dcal(f)<\infty$.
\end{theorem}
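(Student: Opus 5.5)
The plan has three steps. First, show that $f$ lies in the critical Besov space, that is, $f\in B^1_{\infty,1}$. Next, show that $\Dcal(f)<\infty$ (equivalently $f\in B^1_{\infty,1}$) makes the Poisson--Szeg\H{o} extension globally Lipschitz. Finally, identify $u$ with that extension. Existence of the uniform trace $f$ is immediate from $\Vcal_1(u)<\infty$: for fixed $x$, $t\mapsto u(x,t)$ is Cauchy as $t\to0$, uniformly in $x$. Moreover $E_u(t)\le\Vcal_1(u)\,t$, and $f$ is bounded with $\norm{f}_{L^\infty}\le\sup|u|$. Since $u$ is bounded and hyperbolic harmonic with a uniform continuous trace, I would use the representation $u(\cdot,y)=\Pcal_y*f$ in terms of the hyperbolic Poisson kernel. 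This is the same uniqueness step used for Theorem \ref{thm:A}: bounded solutions vanishing uniformly at the boundary vanish. Hence $E_u(t)=\norm{\Pcal_t*f-f}_{L^\infty}$.

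\textbf{Step 1 (inverse approximation, summed over scales).} Let $\Delta_j$ be Littlewood--Paley blocks. The Fourier multiplier of $\Pcal_y$ is $m_d(y|\xi|)$, and on the annulus $|\xi|\sim2^j$ with $y=2^{-j}$ the symbol $1-m_d(y|\xi|)$ is bounded away from zero. Its reciprocal, smoothly cut off to the annulus, is an $L^1$-bounded multiplier, uniformly in $j$ by scaling. This gives
\[
 \norm{\Delta_jf}_{L^\infty}\lesssim\norm{\Delta_j(f-\Pcal_{2^{-j}}*f)}_{L^\infty}\lesssim E_u(2^{-j}).
\]
I would apply the same bound with $y=t\in[2^{-j},2^{-j+1}]$ and average in $t$ to get $\sum_{j\ge1}2^j\norm{\Delta_jf}_\infty\lesssim\int_0^1E_u(t)t^{-2}\dd t$. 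Adding the low-frequency piece, $\norm{\Delta_0f}_\infty\lesssim\norm{f}_\infty$, gives $f\in B^1_{\infty,1}$. Conversely, $E_u(t)\le\sum_j\norm{(1-m_d(t|\xi|))\Delta_jf}_\infty\lesssim\sum_j\min(1,t2^j)\norm{\Delta_jf}_\infty$. This uses the fact that $1-m_d(s)=O(s)$ near $0$ in the $L^1$-multiplier sense, and that is exactly where $d\ge2$ enters, through the finite first moment. Integrating against $t^{-2}$ yields the reverse inequality. Combined with the standard equivalence between $B^1_{\infty,1}$ and the second-difference Dini--Zygmund norm, this proves the stated equivalence with $\Dcal(f)<\infty$.

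\textbf{Step 2 (Lipschitz bound).} Since $B^1_{\infty,1}\hookrightarrow\Lip$ with $\norm{\nabla f}_\infty\le\sum_j\norm{\nabla\Delta_jf}_\infty\lesssim\sum_j2^j\norm{\Delta_jf}_\infty$, the trace is Lipschitz. Horizontal derivatives commute with convolution, so $\norm{\nabla_xu(\cdot,y)}_\infty\le\norm{\Pcal_y}_{L^1}\norm{\nabla f}_\infty$, and $\norm{\Pcal_y}_{L^1}=1$. For the vertical derivative I would write
\[
 \partial_y u(\cdot,y)=\sum_j(\partial_y\Pcal_y)*\Delta_jf.
\]
For $2^j\le1/y$, the first-moment bound $\int|z|\,|\partial_y\Pcal_y(z)|\dd z\le C_d$ (valid for $d\ge2$), together with $\int\partial_y\Pcal_y=0$, gives a contribution $\lesssim\norm{\nabla\Delta_jf}_\infty\lesssim2^j\norm{\Delta_jf}_\infty$. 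For $2^j>1/y$, the bound $\norm{\partial_y\Pcal_y}_{L^1}\lesssim1/y<2^j$ gives the same. Summing over $j$ gives $\sup|\partial_yu|\lesssim\norm{f}_{B^1_{\infty,1}}$. Alternatively, one can simply invoke the Chen et al.\ result that Lipschitz boundary data have globally Lipschitz extensions when $n\ge3$, transferred to the half-space. Either way, the gradient of $u$ is bounded on the convex set $\Hh^n$, so $u$ is Lipschitz there, and by continuity on $\overline{\Hh^n}$. The resulting constant is $C_d(\norm{f}_\infty+\int_0^1E_ut^{-2}\dd t)$.

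\textbf{Main obstacle.} The delicate points are the uniform $L^1$ multiplier bounds for $(1-m_d(s))^{-1}$ on annuli and for $(1-m_d(s))/s$ near $0$. Both require explicit knowledge of $m_d$, which is a Bessel-type function of $s=y|\xi|$. I expect this to be handled by the earlier Fourier--Bessel analysis used for Theorem \ref{thm:A}, and to hinge on $m_d$ being strictly below $1$ for $s>0$ and vanishing to first order at $s=0$ when $d\ge2$.
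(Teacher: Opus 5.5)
The Lipschitz conclusion and its constant are handled correctly, along the paper's route. Inverting $1-m_d$ on dyadic annuli for $t\in[2^{-j},2^{-j+1}]$ and integrating against $t^{-2}\dd t$ is the first half of Lemma \ref{lem:approxBesov}. Your bound on $\partial_y u$ via the first moment of $\partial_y\Pcal_y$ is Lemma \ref{lem:extension} with $\alpha=1$. Going straight from $\sum_j2^j\norm{\Delta_jf}_{L^\infty}$ to $\Lip$ by Bernstein is a slight streamlining, since it skips $\Dcal(f)$ and \eqref{eq:BesovDyadic}.

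\textbf{The gap: the reverse inequality fails as written.} You need this inequality for the last sentence of the theorem, that \eqref{eq:intrinsicA} is equivalent to $\Dcal(f)<\infty$. The bound $E_u(t)\lesssim\sum_j\min(1,t2^j)\norm{\Delta_jf}_{L^\infty}$ is true but useless after integration. For every $j$ the weight $\int_0^1\min(1,t2^j)\,t^{-2}\dd t$ diverges, because the integrand equals $2^j/t$ for $t<2^{-j}$. First-order vanishing of $1-m_d$ is not enough.

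\textbf{How to fix it.} You need the second-order gain that comes from the evenness of the kernel. With $g=\Delta_jf$,
\[
 \Pcal_t*g(x)-g(x)=\frac12\int_{\R^d}\Pcal_t(z)\bigl(g(x+z)-2g(x)+g(x-z)\bigr)\dd z ,
\]
and by Bernstein the second difference is $\lesssim\min\{2^{2j}|z|^2,1\}\norm{g}_{L^\infty}$. Using $\Pcal_1(w)=O(|w|^{-2d})$ this gives
\[
 \norm{(I-\Pcal_t)\Delta_jf}_{L^\infty}\lesssim q_d(2^jt)\norm{\Delta_jf}_{L^\infty},
\]
with $q_d(s)=\min\{1,s^2\}$ for $d>2$ and $q_d(s)=\min\{1,s^2(1+|\log s|)\}$ for $d=2$. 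This is exactly the paper's estimate. Since $q_d(s)/s^2$ is integrable at $0$, one gets $\int_0^1q_d(2^jt)\,t^{-2}\dd t\le C_d2^j$, and summing over $j$ gives the converse.

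So the dimensional input for this direction is not the finite first moment of $\Pcal_1$; that is what Step 2 uses. It is the second moment, which is finite for $d>2$ and only logarithmically divergent for $d=2$. Correspondingly, the $L^1$-multiplier bound for $(1-m_d(s))/s$ that you flag as the main obstacle is not the relevant one.
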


\begin{theorem}[Endpoint modulus criterion with Dini summability]\label{thm:endpointB}
Let $n\ge3$.  Suppose that $u:\Hh^n\to\R$ is bounded and hyperbolic harmonic, extends continuously to $\overline{\Hh^n}$, and has trace $f$.  Assume that $\Mcal_1(u)<\infty$ and
\begin{equation}\label{eq:intrinsicB}
 \int_0^1\frac{E_{|u|}^{*}(t)}{t^2}\dd t<\infty.
\end{equation}
Then $u\in\Lip(\overline{\Hh^n})$, with
\[
 [u]_{\Lip(\overline{\Hh^n})}
 \le C_d\left(
   \norm{f}_{L^\infty}
   +\int_0^1\frac{E_{|u|}^{*}(t)}{t^2}\dd t
 \right).
\]
\end{theorem}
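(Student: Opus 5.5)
The strategy is to reduce Theorem \ref{thm:endpointB} to Theorem \ref{thm:endpointA} via the zero-crossing argument. The work splits into three pieces: establishing vertical Lipschitz control, building the summable approximation rate for the Poisson approximants of $f$, and applying the endpoint transversal criterion.

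\textbf{Step 1: a pointwise bound on $E_u$.} Fix $x$ and $y\in(0,1]$, and set $a=u(x,y)$, $b=f(x)$.

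If $a$ and $b$ have the same sign, then $|a-b|=\bigl||a|-|b|\bigr|\le E_{|u|}^*(y)$.

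If $a$ and $b$ have opposite signs, continuity of $t\mapsto u(x,t)$ on $[0,y]$ produces some $t_0\in(0,y)$ with $u(x,t_0)=0$. Then
\[
|b|=\bigl||u(x,t_0)|-|f(x)|\bigr|\le E_{|u|}^*(t_0)\le E^*_{|u|}(y),
\]
and similarly $|a|\le E_{|u|}^*(y)$. This gives
\[
|a-b|\le |a|+|b|\le 2E^*_{|u|}(y).
\]
The monotone envelope is exactly what makes the value at $t_0$ controllable by the value at $y$.

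In both cases $E_u(y)\le 2E^*_{|u|}(y)$, and so
\[
\int_0^1 \frac{E_u(t)}{t^2}\,dt\le 2\int_0^1 \frac{E^*_{|u|}(t)}{t^2}\,dt.
\]
The same argument with $\Mcal_1$ gives $E_u(y)\le 2\Mcal_1(u)\,y$ for every $y>0$.

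\textbf{Step 2: vertical Lipschitz control, the main obstacle.} Theorem \ref{thm:endpointA} requires $\Vcal_1(u)<\infty$, not merely the bound at $s=0$ obtained in Step 1. I would supply this as follows.

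Since $u=\Pcal_y*f$ with $f$ bounded and continuous, the convergence integral together with the Besov characterization stated after Theorem \ref{thm:endpointA} gives $\Dcal(f)<\infty$. Specifically, the dyadic inverse-approximation mechanism yields
\[
2^j\|\Delta_j f\|_\infty\lesssim 2^{j}E_u(2^{-j}),
\]
and the sum of these is controlled by the integral. Hence $f\in B^1_{\infty,1}\subset \Lip$, with
\[
[f]_{\Lip}\lesssim \|f\|_\infty+\int_0^1 \frac{E_u(t)}{t^2}\,dt.
\]

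Then the forward result of Chen et al., in its upper-half-space form, or directly the first-moment bound for $\partial_y\Pcal_y$ (finite since $d\ge2$), gives
\[
|\partial_y u(x,y)|\lesssim [f]_{\Lip}.
\]
This yields $\Vcal_1(u)<\infty$ quantitatively, and in fact global Lipschitz continuity outright. If the paper's Theorem \ref{thm:endpointA} is proved exactly along these lines, I would simply observe that its proof uses $\Vcal_1$ only to identify the trace and to obtain $E_u(t)=O(t)$, both of which Step 1 already provides. Theorem \ref{thm:endpointA} then applies verbatim.

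\textbf{Step 3: conclusion.} Insert the bound from Step 1 into the estimate of Theorem \ref{thm:endpointA}. This gives
\[
[u]_{\Lip(\overline{\Hh^n})}\le C_d\left(\|f\|_\infty+2\int_0^1 \frac{E^*_{|u|}(t)}{t^2}\,dt\right),
\]
which is the claimed bound after adjusting $C_d$. The hypothesis $\Mcal_1(u)<\infty$ enters only to guarantee the linear rate $E_u(y)=O(y)$, and so the uniform convergence needed to treat $f$ as the uniform trace.
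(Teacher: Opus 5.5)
Your argument is correct and is essentially the paper's proof: you use the zero-crossing bound with the monotone envelope, then Lemma \ref{lem:approxBesov}, Lemma \ref{lem:DZLip} and Lemma \ref{lem:extension}. Your Step 2 already gives global Lipschitz continuity, so the detour through Theorem \ref{thm:endpointA} in Step 3 is unnecessary. One small slip: in the opposite-sign case you only get $|u(x,y)|\le|f(x)|+E^{*}_{|u|}(y)\le 2E^{*}_{|u|}(y)$, not $E^{*}_{|u|}(y)$, so the bound is $E_u(y)\le 3E^{*}_{|u|}(y)$ as in the paper; this is harmless after adjusting $C_d$.
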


\begin{remark}
Once a uniform trace is assumed, the integrated approximation condition in Theorem \ref{thm:endpointA} already forces $f\in B_{\infty,1}^{1}$ and hence a Lipschitz extension; the hypothesis $\Vcal_1(u)<\infty$ is used there to produce the trace from transversal data.  In Theorem \ref{thm:endpointB}, the trace is part of the assumptions, and the Dini integral itself implies the linear modulus estimate at small scales by monotonicity of $E_{|u|}^{*}$.  We retain $\Mcal_1(u)<\infty$ to display the result as a corrected endpoint version of Theorem \ref{thm:B}.
\end{remark}

\begin{corollary}[Boundary Dini--Zygmund formulations]\label{cor:boundaryDZ}
Let $u=\Pcal[f]$ be the bounded hyperbolic Poisson extension of a bounded continuous trace.
\begin{enumerate}[label=\textup{(\roman*)}]
\item If $f:\R^d\to\R^m$ satisfies $\Dcal(f)<\infty$, then $u\in\Lip(\overline{\Hh^n};\R^m)$.
\item If $f$ is real-valued and $\Dcal(|f|)<\infty$, then $u\in\Lip(\overline{\Hh^n})$.
\end{enumerate}
\end{corollary}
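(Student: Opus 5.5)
For part (i) we will reduce to Theorem~\ref{thm:endpointA}, and for part (ii) to Theorem~\ref{thm:endpointB}. In both cases the link is one kernel estimate that turns second differences of a boundary function into approximation errors. Write $\Pcal_y(z)=y^{-d}\Pcal(z/y)$ for the half-space hyperbolic Poisson kernel. We use three facts about it: it has total mass one, it is even, and $\Pcal(z)\lesssim(1+|z|)^{-2d}$. Since $d\ge2$, the first moment $\int\Pcal(z)|z|\,dz$ is finite. The same holds for $\int|z|\,|\partial_y\Pcal_y(z)|\,dz\lesssim1$, using $|\partial_y\Pcal_y(z)|\lesssim y^{-d-1}(1+|z|/y)^{-2d}$.

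\textbf{Part (i).} Since $\Pcal$ is even, $u(x,t)-f(x)=\tfrac12\int\Pcal(z)\bigl(f(x+tz)-2f(x)+f(x-tz)\bigr)\,dz$, and so $E_u(t)\le\tfrac12\int\Pcal(z)\,\omega_2(f,t|z|)_\infty\,dz$. We integrate against $t^{-2}$ over $(0,1)$ and substitute $s=t|z|$:
\[
\int_0^1\frac{E_u(t)}{t^2}\dd t\le\tfrac12\int\Pcal(z)|z|\int_0^{|z|}\frac{\omega_2(f,s)_\infty}{s^2}\dd s\,\dd z .
\]
For $|z|\le1$ the inner integral is at most $\Dcal(f)$. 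For $|z|>1$ we use $\omega_2\le4\norm{f}_{L^\infty}$ on $(1,|z|)$, so the inner integral is at most $\Dcal(f)+4\norm{f}_{L^\infty}$. The first moment then makes the whole expression finite. It remains to check $\Vcal_1(u)<\infty$. As stated after the definition of $\Dcal$, finiteness of $\Dcal(f)$ implies that $f$ is Lipschitz. Since $\int\partial_y\Pcal_y=0$, we have $\partial_yu(x,y)=\int\partial_y\Pcal_y(z)\bigl(f(x-z)-f(x)\bigr)\,dz$, hence $|\partial_yu|\le[f]_{\Lip}\int|z||\partial_y\Pcal_y(z)|\,dz\lesssim[f]_{\Lip}$. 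Theorem~\ref{thm:endpointA} now applies, and its trace is $f$ because $u=\Pcal[f]$.

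\textbf{Part (ii).} The key observation concerns zeros of $f$. The function $|f|$ has finite $\Dcal$, so it is $C^1$, and it attains its minimum $0$ at every zero $z_0$ of $f$. For any $h$, both $|f|(z_0\pm h)$ are nonnegative and their sum equals the second difference of $|f|$ at $z_0$. Hence
\[
|f(z_0+h)|\le\omega_2(|f|,|h|)_\infty .
\]
This is the zero-crossing device, and it gives a bound with no logarithmic loss. A cruder route, which only uses the modulus of continuity of $\nabla|f|$, would lose a factor $\log(1/t)$ in the Dini integral.

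Now fix $x$ with $f(x)>0$; the case $f(x)<0$ is symmetric and $f(x)=0$ is easier. Write $u=\Pcal[|f|]-2\Pcal[f^-]$. This gives
\[
|f(x)|-u(x,y)=\bigl(|f(x)|-\Pcal_y*|f|(x)\bigr)+2\,\Pcal_y*f^-(x).
\]
The first term is bounded by $\int\Pcal(z)\,\omega_2(|f|,y|z|)\,dz$ by the even-kernel identity. For the second term, take $w$ with $f(w)<0$. The segment from $x$ to $w$ contains a zero $z_0$ of $f$, and the observation gives $f^-(w)\le\omega_2(|f|,|w-z_0|)\le\omega_2(|f|,|w-x|)$. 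So the second term obeys the same bound. Because $|u|\le\Pcal[|f|]$, we also get
\[
\bigl||u(x,y)|-|f(x)|\bigr|\le F(y):=C\int\Pcal(z)\,\omega_2(|f|,y|z|)_\infty\,dz .
\]

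The function $F$ is nondecreasing, so $E^*_{|u|}\le F$. The computation from part (i), with $|f|$ in place of $f$, shows $\int_0^1F(t)t^{-2}\dd t\lesssim\Dcal(|f|)+\norm{f}_{L^\infty}$. Moreover $\omega_2(|f|,s)\lesssim[|f|]_{\Lip}s$ together with the first moment gives $F(y)\lesssim y$, so $\Mcal_1(u)<\infty$. Continuity of $u=\Pcal[f]$ up to the boundary is standard for bounded continuous $f$. Theorem~\ref{thm:endpointB} therefore yields $u\in\Lip(\overline{\Hh^n})$.

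The main obstacle is the sign-change term $\Pcal_y*f^-$ in part (ii). We only control $|f|$, and any bound on $f^-$ that goes through first-order information about $|f|$ costs a logarithm. The plan relies on the positivity trick at zeros of $f$ to bound $f^-$ directly by $\omega_2(|f|,\cdot)$.
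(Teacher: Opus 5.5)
Your argument is correct, but it takes a genuinely different and considerably longer route than the paper. The paper never invokes Theorems \ref{thm:endpointA} or \ref{thm:endpointB} here:
\begin{itemize}
\item for (i) it applies Lemma \ref{lem:DZLip} to get $f\in\Lip$, then Lemma \ref{lem:extension};
\item for (ii) it applies Lemma \ref{lem:DZLip} to $|f|$, then the elementary zero-crossing Lemma \ref{lem:modulusLip} (if $|f|$ is $L$-Lipschitz then so is $f$), then Lemma \ref{lem:extension}.
\end{itemize}

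Your own verification of the side hypotheses already contains this shortcut. To get $\Vcal_1(u)<\infty$ you use $f\in\Lip$ together with $|\partial_yu|\lesssim[f]_{\Lip}$. Combined with the trivial horizontal estimate, that is exactly the endpoint case of Lemma \ref{lem:extension}. So the Dini integral computation and the appeal to Theorem \ref{thm:endpointA} are redundant.

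Likewise, to get $\Mcal_1(u)<\infty$ you use $[|f|]_{\Lip}<\infty$, and at that point Lemmas \ref{lem:modulusLip} and \ref{lem:extension} finish immediately. The ``main obstacle'' you isolate, controlling $\Pcal_y*f^-$ without logarithmic loss, comes only from the detour through Theorem \ref{thm:endpointB}. It never arises on the paper's route.

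What your route buys is extra information rather than a simpler proof:
\begin{itemize}
\item The even-kernel identity gives a short physical-space proof of the direction $\int_0^1 y^{-2}\norm{\Pcal_y*g-g}_{L^\infty}\dd y\lesssim\norm{g}_{L^\infty}+\Dcal(g)$ of Lemma \ref{lem:approxBesov}, bypassing the annular multiplier estimates.
\item The positivity trick at zeros of $f$ yields the explicit bound $\bigl||u(x,y)|-|f(x)|\bigr|\lesssim\int\Pcal_1(z)\,\omega_2(|f|,y|z|)_\infty\dd z$. This shows directly that the boundary hypotheses of the corollary imply the intrinsic conditions \eqref{eq:intrinsicA} and \eqref{eq:intrinsicB}.
\end{itemize}
As a proof of the corollary itself, however, the paper's three-line argument is the efficient one.
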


\begin{remark}
The endpoint assumptions are summable refinements of the ordinary Zygmund bound.  Proposition \ref{prop:counterexample} has a trace in $B_{\infty,\infty}^{1}$ but not in $B_{\infty,1}^{1}$.  The Dini condition therefore removes the precise lacunary mechanism used in the counterexample.  We do not claim that it is necessary for Lipschitz regularity.
\end{remark}

The upper-half-space formulation is chosen because the kernel is translation invariant in $x$ and its Fourier multiplier is explicit.  A Cayley isometry transfers the qualitative statements to the ball, with vertical lines replaced by the corresponding family of geodesics ending at a fixed ideal boundary point.  Euclidean H\"older seminorms are not invariant under that change of model, so a fully quantitative ball formulation requires the standard conformal weights.  We keep the half-space normalization throughout.

\subsection{Organization of the paper}

Section 2 collects the analytic tools.  We recall the hyperbolic Poisson representation, compute its Fourier--Bessel multiplier, and establish the dyadic estimates used in the counterexample.  We then prove an inverse approximation theorem for $0<\alpha<1$ and the corresponding boundary-to-interior estimate.  The last part of that section treats the critical space $B_{\infty,1}^{1}$, including the equivalence between the Dini--Zygmund functional and an integrated Poisson approximation error.  Section 3 applies these tools.  The subcritical transversal and modulus criteria are proved first; the lacunary endpoint example follows; the Dini endpoint theorems and their boundary formulations conclude the paper.

\section{Poisson kernels, approximation lemmas, and critical second differences}

The arguments in Section 3 reduce the regularity problem to properties of one approximation family.  We therefore begin with the hyperbolic Poisson kernel and its boundary representation.  Fourier analysis enters only after the representation has been fixed.

\subsection{The hyperbolic Poisson kernel and boundary representation}

Put
\[
 c_d=\frac{\Gamma(d)}{\pi^{d/2}\Gamma(d/2)}
\]
and define
\begin{equation}\label{eq:kernel}
 \Pcal_y(x)=c_d\frac{y^d}{(|x|^2+y^2)^d},
 \qquad x\in\R^d,\quad y>0.
\end{equation}
A change to polar coordinates gives
\[
 \int_{\R^d}\Pcal_y(x)\dd x=1.
\]
If $f\in L^\infty(\R^d;\R^m)$, its hyperbolic Poisson extension is
\[
 \Pcal[f](x,y)=(\Pcal_y*f)(x).
\]
Direct differentiation shows that $\Delta_h\Pcal[f]=0$.  Conversely, every bounded hyperbolic harmonic mapping has an essentially bounded Poisson boundary value.  If the mapping has a uniform trace, then that trace agrees with the Poisson boundary value.  We use this standard representation theorem in the form developed in \cite{Jaming1999,Stoll2016}.

Two elementary consequences will be used repeatedly.  Positivity and normalization imply
\[
 \norm{\Pcal_y*f}_{L^\infty}\le\norm{f}_{L^\infty},
\]
and the dilation identity
\[
 \Pcal_y(x)=y^{-d}\Pcal_1(x/y)
\]
shows that $\{\Pcal_y\}_{y>0}$ is an approximate identity.

The representation alone proves the direct boundary-to-interior estimates.  The inverse direction requires information about frequency localization, to which we now turn.

\subsection{The Fourier--Bessel multiplier and dyadic estimates}

We use the Fourier transform
\[
 \widehat g(\xi)=\int_{\R^d}e^{-ix\cdot\xi}g(x)\dd x.
\]
The transform of \eqref{eq:kernel} is radial:
\begin{equation}\label{eq:multiplier}
 \widehat{\Pcal_y}(\xi)=m_d(y|\xi|),
 \qquad
 m_d(t)=\frac{2^{1-d/2}}{\Gamma(d/2)}t^{d/2}K_{d/2}(t),
\end{equation}
where $K_\nu$ denotes the modified Bessel function of the second kind.

\begin{lemma}[Multiplier estimates]\label{lem:Bessel}
Let $d\ge2$.  The function $m_d$ is positive and strictly decreasing on $(0,\infty)$, with $m_d(0)=1$ and $m_d(t)\to0$ as $t\to\infty$.  Moreover,
\[
 |m_d'(t)|\le C_d
 \begin{cases}
 t(1+|\log t|),&0<t\le1,\ d=2,\\
 t,&0<t\le1,\ d>2,\\
 t^{d/2-1/2}e^{-t},&t\ge1,
 \end{cases}
\]
and
\[
 0\le1-m_d(t)\le C_d
 \begin{cases}
 t^2(1+|\log t|),&0<t\le1,\ d=2,\\
 t^2,&0<t\le1,\ d>2,\\
 1,&t\ge1.
 \end{cases}
\]
Consequently,
\begin{equation}\label{eq:dyadicBessel}
 \sup_{r>0}\sum_{j=1}^\infty |m_d'(2^j r)|<\infty
\end{equation}
and
\begin{equation}\label{eq:dyadicApprox}
 \sum_{j=1}^\infty2^{-j}|1-m_d(2^j r)|\le C_d r,
 \qquad r>0.
\end{equation}
\end{lemma}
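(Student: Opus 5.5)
The plan is to work directly with the closed form $m_d(t)=\frac{2^{1-d/2}}{\Gamma(d/2)}\,t^{\nu}K_\nu(t)$, where $\nu=d/2\ge1$, and to use standard Bessel identities. Positivity is immediate because $K_\nu>0$ on $(0,\infty)$. For the derivative we use $\frac{d}{dt}\bigl(t^\nu K_\nu(t)\bigr)=-t^\nu K_{\nu-1}(t)$, which gives
\[
 m_d'(t)=-\frac{2^{1-d/2}}{\Gamma(d/2)}\,t^{d/2}K_{d/2-1}(t)<0 .
\]
So $m_d$ is strictly decreasing. The small-argument asymptotic $K_\nu(t)\sim\frac12\Gamma(\nu)(t/2)^{-\nu}$ gives $t^\nu K_\nu(t)\to2^{\nu-1}\Gamma(\nu)$, and the constant is chosen precisely so that $m_d(0)=1$; this is consistent with $\int\Pcal_y=1$. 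The exponential decay of $K_\nu$ gives $m_d(t)\to0$ as $t\to\infty$.

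For the derivative bounds, split at $t=1$.
\begin{itemize}
\item For $t\ge1$, use $K_\mu(t)\le C_\mu t^{-1/2}e^{-t}$. This yields $|m_d'(t)|\le C t^{d/2-1/2}e^{-t}$.
\item For $0<t\le1$ and $d>2$, the index $\mu=d/2-1$ is positive. Then $K_\mu(t)\le Ct^{-\mu}$, so $|m_d'(t)|\le Ct^{d/2-\mu}=Ct$.
\item For $d=2$, $\mu=0$ and $K_0(t)\le C(1+|\log t|)$ on $(0,1]$. This gives $|m_d'(t)|\le Ct(1+|\log t|)$.
\end{itemize}

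The bounds on $1-m_d$ follow by integration. Monotonicity and $m_d(0)=1$ give $0\le 1-m_d\le1$, which covers $t\ge1$. For $t\le1$,
\[
 1-m_d(t)=\int_0^t|m_d'(s)|\,ds .
\]
When $d>2$ this is at most $Ct^2$. When $d=2$, integrating $s(1+|\log s|)$ from $0$ to $t$ gives at most $Ct^2(1+|\log t|)$, since $|\log s|\ge|\log t|$ on $(0,t]$ forces the log to be handled by a direct computation of $\int_0^t s\,|\log s|\,ds=\tfrac{t^2}{2}|\log t|+\tfrac{t^2}{4}$.

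For the dyadic sum \eqref{eq:dyadicBessel}, fix $r$ and split the indices according to whether $2^jr\le1$ or $2^jr>1$.
\begin{itemize}
\item The small terms are bounded by $C\,2^jr\,(1+|\log(2^jr)|)$. This is a geometric-type sum dominated by its largest term, where $2^jr\approx1$, so it is $O(1)$ uniformly in $r$. The reason is that $\sum_{k\ge0}2^{-k}(1+k)<\infty$ after the reindexing $2^jr=2^{-k}\cdot(\text{a number in }(1/2,1])$.
\item The large terms are bounded by $\sum_{k\ge0}g(2^k s_0)$ with $s_0\in(1,2]$ and $g(t)=t^{d/2-1/2}e^{-t}$. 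This is uniformly bounded because of the super-exponential decay along the dyadic sequence.
\end{itemize}

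For \eqref{eq:dyadicApprox}, again split at $2^jr=1$. The indices with $2^jr>1$ contribute $\sum_{2^j>1/r}2^{-j}\le 2r$. The indices with $2^jr\le1$ contribute
\[
 \sum 2^{-j}\,C\,(2^jr)^2\bigl(1+|\log 2^jr|\bigr)=Cr\sum 2^jr\bigl(1+|\log 2^jr|\bigr),
\]
and the last sum is the same $O(1)$ geometric sum as before, giving $Cr$.

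I expect the main care point to be $d=2$. There the index $\mu=0$ produces the logarithm, and one has to check that the logarithmic factor is absorbed by the geometric decay $2^{-k}$, which it is, since $\sum_k k2^{-k}<\infty$. A second care point is confirming that the normalization constant matches $m_d(0)=1$. Everything else is routine use of Bessel asymptotics, with constants depending only on $d$.
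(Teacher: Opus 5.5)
Your proposal is correct and follows essentially the same route as the paper: you use the identity $\frac{\mathrm{d}}{\mathrm{d}t}\bigl(t^\nu K_\nu(t)\bigr)=-t^\nu K_{\nu-1}(t)$ for monotonicity, small- and large-argument Bessel asymptotics split at $t=1$, integration of $-m_d'$ for the bounds on $1-m_d$, and a split of the dyadic sums at $2^J r\le 1<2^{J+1}r$, with the reindexing $k=J-j$ absorbing the logarithm when $d=2$. The only slip is cosmetic: for $r>1$ the first large-range point is $s_0=2r>2$ rather than $s_0\in(1,2]$, but since $2^k s_0>2^k$ the tail bound still holds uniformly.
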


\begin{proof}
Set $\nu=d/2$.  The Bessel identity
\[
 \frac{\mathrm{d}}{\mathrm{d}t}\bigl(t^\nu K_\nu(t)\bigr)
 =-t^\nu K_{\nu-1}(t)
\]
shows that $m_d'(t)<0$.  The limits at zero and infinity follow from the standard asymptotic formulas for $K_\nu$ \cite{Watson}.  If $\nu>1$, then $K_{\nu-1}(t)=O(t^{1-\nu})$ as $t\downarrow0$, so $m_d'(t)=O(t)$.  When $\nu=1$, the estimate $K_0(t)=O(1+|\log t|)$ gives the logarithmic bound.  At infinity, $K_\mu(t)=O(t^{-1/2}e^{-t})$.  Integrating $-m_d'$ from $0$ to $t$ yields the asserted estimates for $1-m_d$.

We verify the dyadic conclusions.  Suppose first that $0<r<1$ and choose the integer $J\ge0$ so that $2^J r\le1<2^{J+1}r$.  For $d>2$,
\[
 \sum_{1\le j\le J}|m_d'(2^j r)|
 \le C_d r\sum_{1\le j\le J}2^j\le C_d.
\]
For $d=2$, write $j=J-k$ and use $2^J r\asymp1$ to obtain
\[
 \sum_{1\le j\le J}2^j r\bigl(1+|\log(2^j r)|\bigr)
 \le C\sum_{k\ge0}2^{-k}(1+k)<\infty.
\]
The sum over $j>J$ is bounded by the exponentially decreasing estimate.  If $r\ge1$, every term is in the large-parameter range and the same tail estimate applies.  This proves \eqref{eq:dyadicBessel}.

For \eqref{eq:dyadicApprox}, the low-frequency part in dimension $d>2$ satisfies
\[
 \sum_{1\le j\le J}2^{-j}(2^j r)^2
 =r^2\sum_{1\le j\le J}2^j\le C r.
\]
In dimension $d=2$ the same computation gives
\[
 r^2\sum_{1\le j\le J}2^j
 \bigl(1+|\log(2^j r)|\bigr)\le C r.
\]
The remaining tail is bounded by $\sum_{j>J}2^{-j}\le C2^{-J}\le C r$.  For $r\ge1$, boundedness of $m_d$ gives the estimate immediately.
\end{proof}

The dyadic bounds have two roles.  They control the normal derivative of the lacunary example, and they quantify how quickly that example approaches its trace.  For the positive results, we need an inverse theorem valid for arbitrary bounded data.

\subsection{Poisson approximation and H\"older spaces}

We use the standard Littlewood--Paley notation from \cite{Grafakos,Triebel1983}.  Choose a radial function $\psi\in C_c^\infty(\R^d\setminus\{0\})$, supported in $\{1/2\le|\xi|\le2\}$, such that
\[
 \sum_{j\in\mathbb Z}\psi(2^{-j}\xi)=1,
 \qquad \xi\ne0,
\]
and define
\[
 \Delta_jg=\F^{-1}\bigl(\psi(2^{-j}\cdot)\widehat g\bigr).
\]
The associated convolution kernels are scalar, so every estimate below is independent of the finite target dimension.

\begin{lemma}[Inverse approximation theorem]\label{lem:inverse}
Let $d\ge2$, $0<\alpha<1$, and $g\in L^\infty(\R^d;\R^m)$.  Then
\begin{equation}\label{eq:approxNorm}
 A_\alpha(g)=\sup_{y>0}y^{-\alpha}
 \norm{\Pcal_y*g-g}_{L^\infty}<\infty
\end{equation}
if and only if $g$ has a representative in $C^{0,\alpha}(\R^d;\R^m)$.  Quantitatively,
\[
 C_{d,\alpha}^{-1}[g]_{C^{0,\alpha}}
 \le A_\alpha(g)
 \le C_{d,\alpha}[g]_{C^{0,\alpha}}.
\]
\end{lemma}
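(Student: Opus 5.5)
The direct inequality $A_\alpha(g)\lesssim[g]_{C^{0,\alpha}}$ comes straight from the kernel. Since $\int\Pcal_y=1$,
\[
 \Pcal_y*g(x)-g(x)=\int_{\R^d}\Pcal_y(z)\bigl(g(x-z)-g(x)\bigr)\dd z ,
\]
so
\[
 |\Pcal_y*g(x)-g(x)|\le[g]_{C^{0,\alpha}}\int_{\R^d}\Pcal_y(z)|z|^\alpha\dd z
 =y^\alpha[g]_{C^{0,\alpha}}\int_{\R^d}\Pcal_1(z)|z|^\alpha\dd z .
\]
The last integral is finite because $\Pcal_1(z)\sim|z|^{-2d}$ and $\alpha<1<d$. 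No Fourier analysis is needed in this direction.

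For the inverse inequality we pass through the Littlewood--Paley characterization $C^{0,\alpha}=B^{\alpha}_{\infty,\infty}$ for $0<\alpha<1$. For a bounded $g$, this says that $[g]_{C^{0,\alpha}}\simeq\sup_{j\in\mathbb Z}2^{j\alpha}\|\Delta_jg\|_{L^\infty}$, modulo polynomials. Here $g$ is bounded, so the low-frequency sum converges and $g$ equals the Littlewood--Paley sum up to a constant.

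Fix $j$ and take $y=2^{-j}$. Let $\tilde\psi$ be a smooth radial cutoff supported in $\{1/4\le|\xi|\le4\}$ with $\tilde\psi=1$ on $\operatorname{supp}\psi$. On that annulus $y|\xi|\in[1/4,4]$, and by Lemma~\ref{lem:Bessel} $m_d$ is strictly decreasing with $m_d(0)=1$. Hence $1-m_d(t)\ge c_d>0$ for $t\in[1/4,4]$. We then write
\[
 \Delta_jg=\F^{-1}\Bigl(\frac{\tilde\psi(2^{-j}\xi)}{m_d(2^{-j}|\xi|)-1}\Bigr)*\Delta_j\bigl(\Pcal_{2^{-j}}*g-g\bigr).
\]
By dilation, the multiplier $\tilde\psi(\eta)/(m_d(|\eta|)-1)$ has an $L^1$ kernel with norm independent of $j$. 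This holds because the symbol is $C_c^\infty$: $m_d$ is smooth away from the origin, since $t^{\nu}K_\nu(t)$ is real analytic for $t>0$. The operators $\Delta_j$ are also uniformly bounded on $L^\infty$. Therefore
\[
 \|\Delta_jg\|_{L^\infty}\le C_d\|\Pcal_{2^{-j}}*g-g\|_{L^\infty}\le C_dA_\alpha(g)2^{-j\alpha}.
\]

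To turn this into a H\"older bound on a pair $x,x+h$, pick $N$ with $2^{-N}\approx|h|$ and split the sum at $N$. For the low frequencies we use Bernstein's inequality:
\[
 \sum_{j\le N}|h|\,2^j\|\Delta_jg\|_\infty\lesssim A_\alpha(g)|h|\,2^{N(1-\alpha)}\lesssim A_\alpha(g)|h|^\alpha .
\]
This geometric sum converges because $\alpha<1$. For the high frequencies,
\[
 \sum_{j>N}2\|\Delta_jg\|_\infty\lesssim A_\alpha(g)2^{-N\alpha}\lesssim A_\alpha(g)|h|^\alpha ,
\]
which converges because $\alpha>0$. This yields $[g]_{C^{0,\alpha}}\le C_{d,\alpha}A_\alpha(g)$. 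The scalar nature of all the kernels gives independence of $m$.

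The main obstacle is justifying the identity $g=\sum_j\Delta_jg$ for a merely bounded $g$. Two issues arise: the decomposition must converge, and we must obtain a continuous representative. The low-frequency part $\sum_{j\le0}\Delta_jg$ need not converge in $L^\infty$. However, its differences converge absolutely, by the Bernstein estimate $\|\nabla\Delta_jg\|_\infty\lesssim2^{j}\|g\|_\infty$ summed over $j\le0$. The high-frequency part converges uniformly, by the bound $2^{-j\alpha}$ just proved. Together these show that $g$ agrees almost everywhere with a continuous function, modulo a polynomial. Boundedness of $g$ forces that polynomial to be constant, and a constant does not affect the H\"older seminorm.

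The other delicate point is that the inverse bound depends only on the invertibility of $1-m_d$ on a single annulus, which Lemma~\ref{lem:Bessel} supplies. No decay of $m_d$ beyond smoothness is needed.
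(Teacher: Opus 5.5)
Your proposal is correct and follows the paper's proof essentially step for step. Both arguments obtain the direct bound from the $|z|^\alpha$-moment of $\Pcal_1$, invert $1-m_d$ on a single dyadic annulus to get $\|\Delta_j g\|_{L^\infty}\lesssim A_\alpha(g)2^{-j\alpha}$, and split the Bernstein and tail sums at $2^{-N}\approx|h|$. The paper uses the symbol $\psi(\xi)/(1-m_d(|\xi|))$ directly instead of your auxiliary cutoff $\tilde\psi$ composed with $\Delta_j$, and your treatment of the low frequencies (summing differences rather than blocks) is a slightly more careful version of the paper's remark that the reconstruction holds modulo a polynomial which boundedness reduces to a constant.
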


\begin{proof}
Assume first that $g\in C^{0,\alpha}$.  Positivity and normalization of the kernel give
\[
 |\Pcal_y*g(x)-g(x)|
 \le [g]_{C^{0,\alpha}}
      \int_{\R^d}\Pcal_y(z)|z|^\alpha\dd z.
\]
After $z=yw$, the integral equals
\[
 y^\alpha\int_{\R^d}\Pcal_1(w)|w|^\alpha\dd w.
\]
It is finite because $\Pcal_1(w)=O(|w|^{-2d})$ and $\alpha<d$.  This proves the upper bound for $A_\alpha(g)$.

Conversely, suppose \eqref{eq:approxNorm} holds.  Since $m_d(t)<1$ for $t>0$, the function
\[
 a(\xi)=\frac{\psi(\xi)}{1-m_d(|\xi|)}
\]
is smooth and compactly supported.  Hence
\[
 \Delta_jg=T_j\bigl(g-\Pcal_{2^{-j}}*g\bigr),
\]
where $T_j$ is convolution with $2^{jd}\check a(2^j\cdot)$ and
\[
 \sup_{j\in\mathbb Z}\norm{T_j}_{L^\infty\to L^\infty}
 \le\norm{\check a}_{L^1}.
\]
Therefore
\begin{equation}\label{eq:blockHolder}
 \norm{\Delta_jg}_{L^\infty}
 \le C_dA_\alpha(g)2^{-j\alpha},
 \qquad j\in\mathbb Z.
\end{equation}

Let $h\ne0$ and choose $J\in\mathbb Z$ so that $2^J|h|\le1<2^{J+1}|h|$.  Bernstein's inequality and \eqref{eq:blockHolder} give
\[
 \sum_{j\le J}
 \norm{\Delta_jg(\cdot+h)-\Delta_jg}_{L^\infty}
 \le C A_\alpha(g)|h|
      \sum_{j\le J}2^{j(1-\alpha)}
 \le C_{d,\alpha}A_\alpha(g)|h|^\alpha,
\]
while
\[
 \sum_{j>J}
 \norm{\Delta_jg(\cdot+h)-\Delta_jg}_{L^\infty}
 \le C A_\alpha(g)\sum_{j>J}2^{-j\alpha}
 \le C_{d,\alpha}A_\alpha(g)|h|^\alpha.
\]
The homogeneous Littlewood--Paley reconstruction is valid modulo a polynomial.  Boundedness of $g$ reduces that polynomial to a constant, which disappears from finite differences.  Thus
\[
 |g(x+h)-g(x)|
 \le C_{d,\alpha}A_\alpha(g)|h|^\alpha.
\]
This proves the converse and produces a continuous representative.
\end{proof}

\begin{lemma}[Regularity of the extension]\label{lem:extension}
Let $d\ge2$ and $0<\alpha\le1$.  If $g\in C^{0,\alpha}(\R^d;\R^m)$, where $C^{0,1}$ is interpreted as $\Lip$, then $G(x,y)=\Pcal_y*g(x)$ extends continuously to $\overline{\Hh^n}$ and
\[
 [G]_{\alpha,\Hh^n}
 \le C_{d,\alpha}[g]_{C^{0,\alpha}}.
\]
For $\alpha=1$ this reads
\[
 [G]_{\Lip(\overline{\Hh^n})}
 \le C_d[g]_{\Lip(\R^d)}.
\]
\end{lemma}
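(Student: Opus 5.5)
We prove the estimate by splitting an arbitrary pair of points of $\overline{\Hh^n}$ into a horizontal increment and a vertical increment, and bounding each with the kernel representation $G(x,y)=\int_{\R^d}\Pcal_1(w)\,g(x-yw)\dd w$ obtained from the dilation identity. For $X=(x,s)$ and $Y=(x',t)$ we write
\[
 |G(x,s)-G(x',t)|\le |G(x,s)-G(x',s)|+|G(x',s)-G(x',t)|.
\]
Since $|x-x'|,|s-t|\le|X-Y|$, it suffices to prove the two estimates
\[
 |G(x,s)-G(x',s)|\le[g]_{C^{0,\alpha}}|x-x'|^\alpha,
 \qquad
 |G(x',s)-G(x',t)|\le C_{d,\alpha}[g]_{C^{0,\alpha}}|s-t|^\alpha .
\]

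\textbf{Horizontal step.} Convolution commutes with translation, and the kernel is positive with unit mass. Hence
\[
 |G(x,s)-G(x',s)|\le\int\Pcal_s(z)\,|g(x-z)-g(x'-z)|\dd z\le[g]_{C^{0,\alpha}}|x-x'|^\alpha,
\]
with constant $1$.

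\textbf{Vertical step.} From the rescaled representation,
\[
 |G(x,s)-G(x,t)|\le[g]_{C^{0,\alpha}}\,|s-t|^\alpha\int\Pcal_1(w)|w|^\alpha\dd w .
\]
The integral is finite for all $\alpha\le1<d$, because $\Pcal_1(w)=O(|w|^{-2d})$. Alternatively, one can compare both values with $g(x)$ using the first half of the proof of Lemma \ref{lem:inverse}; this gives the bound with $s^\alpha+t^\alpha$, which is enough when $\min(s,t)\le|s-t|$. The direct rescaled bound above avoids that case split entirely.

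\textbf{Continuity up to the boundary.} The same comparison with the first half of the proof of Lemma \ref{lem:inverse} gives
\[
 \norm{\Pcal_y*g-g}_{L^\infty}\le C[g]_{C^{0,\alpha}}\,y^\alpha .
\]
So $G(\cdot,y)\to g$ uniformly as $y\to0$, and setting $G(x,0)=g(x)$ defines a continuous extension to $\overline{\Hh^n}$. The two step estimates persist at $y=0$, either by continuity or because the representation also holds there with $\Pcal_0=\delta$ in the limit.

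The only step needing any care is the vertical increment, and it reduces to finiteness of the first moment of $\Pcal_1$ when $\alpha=1$. This holds since $2d-1>d$ for $d\ge1$, so the endpoint $\alpha=1$ creates no extra difficulty. The constant $C_d$ in the Lipschitz case is $1+\int\Pcal_1(w)|w|\dd w$.
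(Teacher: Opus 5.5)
Your proof is correct, but your vertical step takes a different and shorter route than the paper's.

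\textbf{The paper's route.} It differentiates in $y$. Using $\int\partial_y\Pcal_y=0$ and the decay $Q_d(w)=O((1+|w|)^{-2d})$ of the rescaled kernel $\partial_y\Pcal_y(z)=y^{-d-1}Q_d(z/y)$, it proves the pointwise bound $|\partial_yG(x,y)|\le C_{d,\alpha}[g]_{C^{0,\alpha}}y^{\alpha-1}$. It then splits into two cases. If $t<2s$, it integrates this derivative bound. If $t\ge2s$, it compares both values with $g(x)$ via the direct half of Lemma \ref{lem:inverse}. At $\alpha=1$ the derivative bound is uniform.

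\textbf{Your route.} You write
\[
 G(x,s)-G(x,t)=\int\Pcal_1(w)\bigl(g(x-sw)-g(x-tw)\bigr)\dd w
\]
and bound the integrand pointwise by $[g]_{C^{0,\alpha}}|s-t|^\alpha|w|^\alpha$. This gives the vertical H\"older estimate in one line, with the explicit constant $\int\Pcal_1(w)|w|^\alpha\dd w$ and no case split. It also covers $s=0$ or $t=0$ automatically, since the rescaled formula is literally $g(x)$ at $y=0$. You could even drop the horizontal/vertical split: $|(x-sw)-(x'-tw)|\le(1+|w|)|X-Y|$ gives the full estimate with constant $\int\Pcal_1(w)(1+|w|)^\alpha\dd w$.

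\textbf{What each buys.} Both arguments rest on the same fact: a kernel decaying like $|w|^{-2d}$ has a finite $\alpha$-th moment. So neither is more general for this lemma. The paper's route additionally yields the Hardy--Littlewood type normal-derivative bound, which matches the gradient estimates of Chen et al.\ but is not needed for the statement. Yours proves exactly what is asserted with less machinery.

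\textbf{A small slip.} The first moment of $\Pcal_1$ is finite iff $2d-1>d$, that is, $d\ge2$, not ``for $d\ge1$''. At $d=1$ the integral $\int|w|(1+w^2)^{-1}\dd w$ diverges. Since the lemma assumes $d\ge2$, your argument is unaffected.
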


\begin{proof}
Horizontal increments are immediate:
\[
 |G(x,y)-G(x',y)|
 \le[g]_{C^{0,\alpha}}|x-x'|^\alpha.
\]
To control the vertical variable, write $\Pcal_y(z)=y^{-d}\Pcal_1(z/y)$.  Since
\[
 \int_{\R^d}\partial_y\Pcal_y(z)\dd z=0,
\]
we have
\[
 \partial_yG(x,y)
 =\int_{\R^d}\partial_y\Pcal_y(z)
   \bigl(g(x-z)-g(x)\bigr)\dd z.
\]
There is a function $Q_d$ such that
\[
 \partial_y\Pcal_y(z)=y^{-d-1}Q_d(z/y),
\]
and $Q_d(w)=O((1+|w|)^{-2d})$.  Consequently,
\[
 \int_{\R^d}|\partial_y\Pcal_y(z)|\,|z|^\alpha\dd z
 =y^{\alpha-1}\int_{\R^d}|Q_d(w)|\,|w|^\alpha\dd w.
\]
The final integral is finite for $0<\alpha\le1$ because $d\ge2$.  Hence
\begin{equation}\label{eq:normalDerivative}
 |\partial_yG(x,y)|
 \le C_{d,\alpha}[g]_{C^{0,\alpha}}y^{\alpha-1}.
\end{equation}

Let $0<s<t$.  If $t<2s$, integration of \eqref{eq:normalDerivative} gives
\[
 |G(x,t)-G(x,s)|
 \le C[g]_{C^{0,\alpha}}(t-s)s^{\alpha-1}
 \le C[g]_{C^{0,\alpha}}|t-s|^\alpha.
\]
If $t\ge2s$, then $s\le t-s$ and $t\le2(t-s)$.  Comparing both values with $g(x)$ and using the direct estimate from Lemma \ref{lem:inverse}, we obtain the same bound.  Horizontal and vertical increments together yield the claimed H\"older estimate.  When $\alpha=1$, \eqref{eq:normalDerivative} is uniform; integration along a broken line, or a straight segment, proves the Lipschitz assertion.
\end{proof}

The preceding lemmas settle the noninteger range.  Their endpoint analogue cannot be obtained by replacing $\alpha$ with one in Lemma \ref{lem:inverse}; the resulting space is $B_{\infty,\infty}^{1}$.  We therefore introduce the summable critical norm needed later.

\subsection{Dini--Zygmund summability and the critical Besov class}

We use an inhomogeneous Littlewood--Paley decomposition
\[
 g=S_0g+\sum_{j\ge0}\Delta_jg.
\]
A standard difference characterization of Besov spaces gives
\begin{equation}\label{eq:BesovDyadic}
 \norm{g}_{L^\infty}+\Dcal(g)
 \asymp_d
 \norm{g}_{L^\infty}
 +\sum_{j\ge0}2^j\norm{\Delta_jg}_{L^\infty}
\end{equation}
for bounded continuous $g$; see \cite{DeVoreLorentz,Triebel1983}.  We record the consequence needed for the extension theorem.

\begin{lemma}[Dini--Zygmund functions are Lipschitz]\label{lem:DZLip}
Let $g\in L^\infty(\R^d;\R^m)$ be continuous and suppose $\Dcal(g)<\infty$.  Then $g$ is Lipschitz and
\[
 [g]_{\Lip(\R^d)}
 \le C_d\bigl(\norm{g}_{L^\infty}+\Dcal(g)\bigr).
\]
\end{lemma}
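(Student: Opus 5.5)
The plan is to use the dyadic characterization \eqref{eq:BesovDyadic} and sum the Littlewood--Paley pieces, differentiating each piece with Bernstein's inequality. By \eqref{eq:BesovDyadic},
\[
 \norm{g}_{L^\infty}+\sum_{j\ge0}2^j\norm{\Delta_jg}_{L^\infty}
 \le C_d\bigl(\norm{g}_{L^\infty}+\Dcal(g)\bigr).
\]
The low-frequency part $S_0g$ is convolution of $g$ with a fixed Schwartz kernel $\Phi$. Hence $\nabla S_0g=(\nabla\Phi)*g$ satisfies $\norm{\nabla S_0g}_{L^\infty}\le\norm{\nabla\Phi}_{L^1}\norm{g}_{L^\infty}$, so $S_0g$ is Lipschitz with constant $C_d\norm{g}_{L^\infty}$.

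Each block $\Delta_jg$ with $j\ge0$ has spectrum in $\{2^{j-1}\le|\xi|\le2^{j+1}\}$. Bernstein's inequality therefore gives
\[
 \norm{\nabla\Delta_jg}_{L^\infty}\le C_d2^j\norm{\Delta_jg}_{L^\infty}.
\]
The series $\sum_j\nabla\Delta_jg$ thus converges absolutely in $L^\infty$, with sum bounded by $C_d(\norm{g}_{L^\infty}+\Dcal(g))$. The series $\sum_j\Delta_jg$ itself also converges uniformly, since $\norm{\Delta_jg}_{L^\infty}\le 2^{-j}\cdot 2^j\norm{\Delta_jg}_{L^\infty}$.

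For fixed $x,h$ I would estimate the increments termwise by the mean value theorem:
\[
 |g(x+h)-g(x)|\le|S_0g(x+h)-S_0g(x)|+\sum_{j\ge0}|\Delta_jg(x+h)-\Delta_jg(x)|
 \le C_d\bigl(\norm{g}_{L^\infty}+\Dcal(g)\bigr)|h|.
\]
The same bound holds componentwise for $\R^m$-valued $g$. Because all kernels are scalar, the constant does not depend on $m$.

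The main obstacle is justifying that the inhomogeneous decomposition $g=S_0g+\sum_{j\ge0}\Delta_jg$ holds pointwise for a merely bounded continuous $g$. I would first get it in $\mathcal S'$ from the partition of unity. The uniform convergence established above then shows that the series sum is a continuous function equal to $g$ as a distribution, hence equal to $g$ everywhere since $g$ is continuous.

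Alternatively, one can avoid Besov machinery and argue directly from second differences via the Marchaud-type inequality
\[
 \omega_1(g,t)\le C_d\,t\Bigl(\norm{g}_{L^\infty}+\int_t^1\frac{\omega_2(g,s)}{s^2}\dd s\Bigr).
\]
It follows from the telescoping identity $g(x+h)-g(x)=2^k\bigl(g(x+2^{-k}h)-g(x)\bigr)+\sum_{i<k}2^i\Delta^2_{2^{-i-1}h}g(x)$, which gives Lipschitz control as $k\to\infty$. I would keep this as a backup, but use the dyadic argument as the main proof since \eqref{eq:BesovDyadic} is already available.
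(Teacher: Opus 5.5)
Your main argument is correct and is essentially the paper's proof: summability $\sum_{j\ge0}2^j\norm{\Delta_jg}_{L^\infty}\lesssim\norm{g}_{L^\infty}+\Dcal(g)$, Bernstein on each block, $\norm{\nabla S_0g}_{L^\infty}\le C_d\norm{g}_{L^\infty}$, and uniform convergence of the Littlewood--Paley series to the continuous $g$; the only difference is that the paper re-derives the summability (even band-pass kernels and $\norm{\Delta_jg}_{L^\infty}\le C_d\,\omega_2(g,2^{-j})_\infty$) instead of quoting \eqref{eq:BesovDyadic}. One caution about your backup: the telescoping identity you wrote goes down to smaller scales and leaves the term $2^k\bigl(g(x+2^{-k}h)-g(x)\bigr)$, essentially a directional derivative, which nothing controls, so letting $k\to\infty$ gives no Lipschitz bound; the Marchaud inequality you quote comes instead from the upward identity $g(x+h)-g(x)=2^{-k}\bigl(g(x+2^kh)-g(x)\bigr)-\sum_{i<k}2^{-i-1}\Delta^2_{2^ih}g(x)$ with $2^k|h|\approx1$, where boundedness of $g$ controls the first term.
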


\begin{proof}
For completeness, we indicate the part of \eqref{eq:BesovDyadic} that is used here.  Choose the band-pass kernels even.  If $K_j(h)=2^{jd}K(2^j h)$ is the kernel of $\Delta_j$, then $\int_{\R^d}K(h)\dd h=0$ and
\[
 \Delta_jg(x)
 =\frac12\int_{\R^d}K_j(h)
   \bigl(g(x+h)-2g(x)+g(x-h)\bigr)\dd h.
\]
The rapid decay of $K$ and the standard scaling inequality for second moduli imply
\[
 \norm{\Delta_jg}_{L^\infty}
 \le C_d\omega_2(g,2^{-j})_\infty.
\]
Monotonicity of $\omega_2$ and its doubling estimate then yield
\[
 \sum_{j\ge0}2^j\norm{\Delta_jg}_{L^\infty}
 \le C_d\int_0^1\frac{\omega_2(g,t)_\infty}{t^2}\dd t.
\]
Bernstein's inequality gives
\[
 \sum_{j\ge0}\norm{\nabla\Delta_jg}_{L^\infty}
 \le C_d\Dcal(g),
\]
whereas
\[
 \norm{\nabla S_0g}_{L^\infty}\le C_d\norm{g}_{L^\infty}.
\]
Moreover, $\sum_{j\ge0}\norm{\Delta_jg}_{L^\infty}<\infty$, so the Littlewood--Paley series itself converges uniformly to the continuous representative $g$.  The gradient series also converges uniformly.  Hence $g$ has a bounded continuous gradient and is Lipschitz with the asserted estimate.
\end{proof}

We now connect the boundary difference condition with an intrinsic approximation integral.  This equivalence is the point at which the endpoint hypotheses in Theorems \ref{thm:endpointA} and \ref{thm:endpointB} enter.

\begin{lemma}[Poisson approximation and $B_{\infty,1}^{1}$]\label{lem:approxBesov}
Let $d\ge2$ and let $g\in L^\infty(\R^d;\R^m)$ be continuous.  Then
\[
 \norm{g}_{L^\infty}+\Dcal(g)
 \asymp_d
 \norm{g}_{L^\infty}
 +\int_0^1
   \frac{\norm{\Pcal_y*g-g}_{L^\infty}}{y^2}\dd y.
\]
\end{lemma}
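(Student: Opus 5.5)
We reduce both sides to the dyadic Besov quantity in \eqref{eq:BesovDyadic}.  That is, writing $B(g)=\sum_{j\ge0}2^j\norm{\Delta_jg}_{L^\infty}$ and
\[
 I(g)=\int_0^1\frac{\norm{\Pcal_y*g-g}_{L^\infty}}{y^2}\dd y,
\]
we prove
\[
 \norm{g}_{L^\infty}+B(g)\lesssim_d \norm{g}_{L^\infty}+I(g)
 \quad\text{and}\quad
 I(g)\lesssim_d\norm{g}_{L^\infty}+B(g).
\]
Combined with \eqref{eq:BesovDyadic}, these two bounds give the lemma.

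\emph{First direction: $B\lesssim\norm{g}_{L^\infty}+I$.}  We reuse the inversion from the proof of Lemma \ref{lem:inverse}.  For each $y\in[2^{-j},2^{1-j}]$ define
\[
 a_y(\xi)=\frac{\psi(2^{-j}\xi)}{1-m_d(y|\xi|)}.
\]
On the support of $\psi(2^{-j}\cdot)$ we have $y|\xi|\in[1/2,4]$, where $1-m_d$ is bounded below by Lemma \ref{lem:Bessel}.  After rescaling by $2^j$, these symbols form a family of smooth functions with uniformly bounded derivatives and supports in a fixed annulus, parametrized by $2^jy\in[1,2]$.  Hence their kernels have $L^1$ norms bounded uniformly in $j$ and $y$, and
\[
 \norm{\Delta_jg}_{L^\infty}\le C_d\norm{\Pcal_y*g-g}_{L^\infty}
 \quad\text{for every } y\in[2^{-j},2^{1-j}].
\]
Averaging over this interval against $\dd y/y^2$, whose mass there is about $2^{j}$, gives
\[
 2^j\norm{\Delta_jg}_{L^\infty}
 \le C_d\int_{2^{-j}}^{2^{1-j}}\frac{\norm{\Pcal_y*g-g}_{L^\infty}}{y^2}\dd y.
\]
Summing over $j\ge1$ yields $I(g)$.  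The term $j=0$ uses $y\in[1,2]$, where we bound $\norm{\Pcal_y*g-g}_{L^\infty}\le2\norm{g}_{L^\infty}$.  This direction is routine.

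\emph{Second direction: $I\lesssim\norm{g}_{L^\infty}+B$.}  Decompose $g=S_0g+\sum_{j\ge0}\Delta_jg$; this series converges uniformly when $B(g)<\infty$.  For $y\in(0,1]$,
\[
 \norm{\Pcal_y*g-g}_{L^\infty}
 \le\norm{(\Pcal_y-\delta)*S_0g}_{L^\infty}
 +\sum_{j\ge0}\norm{(\Pcal_y-\delta)*\Delta_jg}_{L^\infty}.
\]
On a frequency block near $2^j$, the operator $(\Pcal_y-\delta)$ acts as the multiplier $m_d(y|\xi|)-1$.  We bound its norm in two regimes.

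When $y2^j\ge1$, we use the trivial bound $2\norm{\Delta_jg}_{L^\infty}$.

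When $y2^j<1$, we need smallness; this is the step to watch.  The crude estimate $1-m_d(t)\lesssim t^2$ (with a logarithm if $d=2$) holds only pointwise and does not by itself control the operator norm.  The robust route is a second-difference argument.  Since $\Pcal_y$ is even,
\[
 (\Pcal_y*h-h)(x)=\tfrac12\int_{\R^d}\Pcal_y(z)\bigl(h(x+z)-2h(x)+h(x-z)\bigr)\dd z .
\]
For band-limited $h=\Delta_jg$, the second difference is at most $\min(|z|^2\norm{\nabla^2h}_{L^\infty},\,4\norm{h}_{L^\infty})$, and Bernstein gives $\norm{\nabla^2h}_{L^\infty}\lesssim4^j\norm{h}_{L^\infty}$.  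Integrating against $\Pcal_y(z)\sim y^d|z|^{-2d}$, we use the quadratic bound for $|z|<2^{-j}$ and the trivial bound for $|z|\ge2^{-j}$.  The tail contributes
\[
 \int_{|z|>2^{-j}}\Pcal_y(z)\dd z\asymp (y2^j)^d\le (y2^j)^2
\]
since $d\ge2$.  The inner region contributes $4^jy^2$, times $\log(1/(y2^j))$ when $d=2$.  Altogether
\[
 \norm{(\Pcal_y-\delta)*\Delta_jg}_{L^\infty}
 \lesssim\min\bigl(1,(y2^j)^2(1+|\log(y2^j)|)\bigr)\norm{\Delta_jg}_{L^\infty}.
\]
The same argument handles $S_0g$ and gives $\lesssim y^2(1+|\log y|)\norm{g}_{L^\infty}$.

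\emph{Conclusion.}  Insert these bounds into $I(g)$ and apply Tonelli.  The $S_0$ part gives $\int_0^1(1+|\log y|)\dd y\,\norm{g}_{L^\infty}\lesssim\norm{g}_{L^\infty}$.  For each block we need
\[
 \int_0^1\min\bigl(1,(y2^j)^2(1+|\log(y2^j)|)\bigr)\frac{\dd y}{y^2}\lesssim 2^j .
\]
Substituting $t=y2^j$ turns the left side into $2^j\int_0^\infty\min\bigl(1,t^2(1+|\log t|)\bigr)t^{-2}\dd t$, and this integral is finite.  This is the continuous analogue of \eqref{eq:dyadicApprox}.  Summing over $j$ gives $I(g)\lesssim\norm{g}_{L^\infty}+B(g)$, which completes the proof.
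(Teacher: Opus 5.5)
Your proof is correct. The first direction is the paper's argument: invert $1-m_d(y|\xi|)$ on the $j$-th annulus using uniformly $L^1$-bounded kernels, then integrate over one dyadic window in $y$. The only difference is the choice of window. The paper takes $2^{-j-1}\le y\le2^{-j}$, which lies in $(0,1]$ for every $j\ge0$, so it does not need your separate treatment of $j=0$.

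The converse is where you genuinely depart from the paper. The paper gets $\norm{(I-\Pcal_y)\Delta_jg}_{L^\infty}\le C_dq_d(2^jy)\norm{\Delta_jg}_{L^\infty}$ by citing ``standard annular multiplier estimates following from Lemma \ref{lem:Bessel}''. Done on the Fourier side, that estimate needs bounds on several derivatives of $\eta\mapsto1-m_d(2^jy|\eta|)$ on a fixed annulus. Lemma \ref{lem:Bessel} records only $m_d'$ and the size of $1-m_d$, which is exactly the gap you point out.

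Your physical-space argument reaches the same factor $q_d$ (with a harmless extra logarithm when $d>2$). It uses evenness of $\Pcal_y$, the second-difference identity, Bernstein for $\nabla^2\Delta_jg$, and the decay $\Pcal_1(w)\asymp|w|^{-2d}$ with $d\ge2$, so that the tail $(y2^j)^d$ is dominated by $(y2^j)^2$. Thus it needs only positivity, normalization and decay of the kernel, which makes it more self-contained.

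Your identity in fact makes this direction independent of Littlewood--Paley theory. It gives $\norm{\Pcal_y*g-g}_{L^\infty}\le\frac12\int_{\R^d}\Pcal_y(z)\,\omega_2(g,|z|)_\infty\dd z$. Since $\int_0^1y^{-2}\Pcal_y(z)\dd y\le C_d\min\{|z|^{-d-1},|z|^{-2d}\}$, Tonelli yields $I(g)\le C_d\bigl(\Dcal(g)+\norm{g}_{L^\infty}\bigr)$ directly, bypassing \eqref{eq:BesovDyadic}. The paper's route is shorter on the page and keeps both directions inside one multiplier framework.
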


\begin{proof}
By \eqref{eq:BesovDyadic}, it suffices to compare the integral with
\[
 \norm{g}_{L^\infty}
 +\sum_{j\ge0}2^j\norm{\Delta_jg}_{L^\infty}.
\]
Set $E(y)=\norm{\Pcal_y*g-g}_{L^\infty}$.

We first recover each dyadic block from the approximation error.  If
\[
 2^{-j-1}\le y\le2^{-j},
\]
then $y|\xi|$ remains in a fixed compact subset of $(0,\infty)$ on the support of $\psi(2^{-j}\xi)$.  The multipliers
\[
 \frac{\psi(2^{-j}\xi)}{1-m_d(y|\xi|)}
\]
therefore form a uniformly smooth, compactly supported family after scaling.  Their convolution kernels have uniformly bounded $L^1$ norms, and hence
\[
 \norm{\Delta_jg}_{L^\infty}\le C_dE(y)
\]
throughout that interval.  Integrating against $y^{-2}\dd y$ gives
\[
 2^j\norm{\Delta_jg}_{L^\infty}
 \le C_d\int_{2^{-j-1}}^{2^{-j}}
      \frac{E(y)}{y^2}\dd y.
\]
Summation over $j\ge0$ proves one direction.

For the converse, standard annular multiplier estimates following from Lemma \ref{lem:Bessel} give
\[
 \norm{(I-\Pcal_y)\Delta_jg}_{L^\infty}
 \le C_d q_d(2^j y)\norm{\Delta_jg}_{L^\infty},
\]
where
\[
 q_d(s)=
 \begin{cases}
 \min\{1,s^2(1+|\log s|)\},&d=2,\\
 \min\{1,s^2\},&d>2.
 \end{cases}
\]
The same estimate for $S_0g$ is bounded after integration by $C_d\norm{g}_{L^\infty}$.  Moreover,
\[
 \int_0^1\frac{q_d(2^j y)}{y^2}\dd y
 =2^j\int_0^{2^j}\frac{q_d(s)}{s^2}\dd s
 \le C_d2^j.
\]
The logarithmic factor is harmless because $\int_0^1|\log s|\dd s<\infty$.  Summing the dyadic blocks and using Tonelli's theorem yields
\[
 \int_0^1\frac{E(y)}{y^2}\dd y
 \le C_d\left(
   \norm{g}_{L^\infty}
   +\sum_{j\ge0}2^j\norm{\Delta_jg}_{L^\infty}
 \right).
\]
This completes the equivalence.
\end{proof}

All analytic input is now available.  Lemma \ref{lem:inverse} handles the subcritical inverse problem, Lemma \ref{lem:extension} propagates boundary regularity into the half-space, and Lemmas \ref{lem:DZLip}--\ref{lem:approxBesov} supply the missing summability at the endpoint.  We next apply them to the four main statements.

\section{Proofs of the main theorems}

We keep the proofs in the same order as the results in Section 1.  The transversal theorem is the basic step.  The modulus theorem then follows from a one-dimensional sign argument, after which the endpoint example identifies the obstruction and the Dini estimates remove it.

\subsection{The transversal H\"older criterion}

\begin{proof}[Proof of Theorem \ref{thm:A}]
If $u$ has an $\alpha$-H\"older extension to the closure, then
\[
 \Vcal_\alpha(u)\le[u]_{\alpha,\Hh^n}.
\]
Conversely, assume that $\Vcal_\alpha(u)=V<\infty$.  For $s,t>0$,
\[
 \norm{u(\cdot,s)-u(\cdot,t)}_{L^\infty}
 \le V|s-t|^\alpha.
\]
Thus $u(\cdot,y)$ is uniformly Cauchy as $y\downarrow0$.  There is a bounded continuous mapping $f:\R^d\to\R^m$ such that
\begin{equation}\label{eq:traceRate}
 \norm{u(\cdot,y)-f}_{L^\infty}\le Vy^\alpha.
\end{equation}
Continuity follows because $f$ is a uniform limit of the smooth functions $u(\cdot,y)$.

The bounded hyperbolic Poisson representation gives
\[
 u(\cdot,y)=\Pcal_y*f.
\]
Consequently, \eqref{eq:traceRate} says that $A_\alpha(f)\le V$.  Lemma \ref{lem:inverse} yields
\[
 [f]_{C^{0,\alpha}}\le C_{d,\alpha}V.
\]
Applying Lemma \ref{lem:extension} to $f$, we obtain
\[
 [u]_{\alpha,\Hh^n}
 \le C_{d,\alpha}[f]_{C^{0,\alpha}}
 \le C_{d,\alpha}'V.
\]
This proves the extension, the quantitative equivalence, and the independence of the target dimension.
\end{proof}

The preceding proof recovers the full trace from genuine vector differences.  For real-valued functions the next argument shows that differences of moduli contain enough information.  This step is elementary, but it is the only place where order on the target is used.

\subsection{The radial modulus criterion}

\begin{lemma}[Recovering a real difference from its modulus]\label{lem:sign}
Let $v:[0,y]\to\R$ be continuous, let $a=v(0)$, and suppose that
\[
 \bigl||v(t)|-|a|\bigr|\le Mt^\alpha,
 \qquad 0\le t\le y.
\]
Then
\[
 |v(y)-a|\le3My^\alpha.
\]
\end{lemma}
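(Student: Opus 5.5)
The plan is a sign-case analysis that uses the continuity of $v$ only through the intermediate value theorem. Put $b=v(y)$. The hypothesis at $t=y$ gives $\bigl||b|-|a|\bigr|\le My^\alpha$, and this is all we need when $a$ and $b$ do not have strictly opposite signs.

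\emph{Case 1: $ab\ge0$.} Then $a$ and $b$ lie in the same closed half-line, so
\[
 |b-a|=\bigl||b|-|a|\bigr|\le My^\alpha\le 3My^\alpha .
\]
If one of them is zero, this is still an equality of moduli, so no separate treatment is needed.

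\emph{Case 2: $ab<0$.} Here the modulus alone loses the sign, and continuity is needed. Since $v$ is continuous on $[0,y]$ and $v(0)=a$, $v(y)=b$ have opposite signs, the intermediate value theorem gives some $t_0\in(0,y)$ with $v(t_0)=0$. Apply the hypothesis at $t_0$:
\[
 |a|=\bigl||v(t_0)|-|a|\bigr|\le Mt_0^\alpha\le My^\alpha ,
\]
using $t_0\le y$ and the monotonicity of $t\mapsto t^\alpha$. Then
\[
 |b|\le|a|+My^\alpha\le 2My^\alpha ,
\]
and since $a$ and $b$ have opposite signs, $|b-a|=|a|+|b|\le 3My^\alpha$.

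The only step requiring care is Case 2: one must evaluate the hypothesis at the zero $t_0$ rather than at $y$, because that is what bounds $|a|$ itself. Everything else is bookkeeping, and the constant $3$ comes from $|a|\le My^\alpha$ together with $|b|\le 2My^\alpha$. The lemma uses only $\alpha>0$, so it applies equally in the endpoint setting of Theorem \ref{thm:endpointB}. There, the monotone envelope $E^*_{|u|}$ plays the role of $Mt^\alpha$, and the same zero-crossing argument goes through with $M y^\alpha$ replaced by $E^*_{|u|}(y)$.
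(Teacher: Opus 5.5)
Your proof is correct and follows essentially the same route as the paper: the split according to the sign of $v(y)a$, with the intermediate value theorem in the opposite-sign case. In both, the hypothesis is evaluated at the zero to bound $|a|\le My^\alpha$, then at $y$ to get $|v(y)|\le 2My^\alpha$, and the two bounds are added. Your closing remark about replacing $My^\alpha$ by the monotone envelope $E_{|u|}^{*}(y)$ is also exactly how the paper reuses the argument in the proof of Theorem~\ref{thm:endpointB}.
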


\begin{proof}
If $v(y)a\ge0$, then
\[
 |v(y)-a|=\bigl||v(y)|-|a|\bigr|.
\]
If $v(y)a<0$, continuity gives $\tau\in(0,y)$ with $v(\tau)=0$.  Hence
\[
 |a|\le M\tau^\alpha\le My^\alpha
\]
and
\[
 |v(y)|\le|a|+My^\alpha\le2My^\alpha.
\]
Adding the last two bounds proves the claim.
\end{proof}

\begin{proof}[Proof of Theorem \ref{thm:B}]
Global H\"older continuity gives
\[
 \Mcal_\alpha(u)\le[u]_{\alpha,\Hh^n}.
\]
For the converse, put $M=\Mcal_\alpha(u)$.  Apply Lemma \ref{lem:sign} to $v(t)=u(x,t)$ and $a=f(x)$ for each fixed $x$.  Uniformly in $x$,
\[
 \norm{u(\cdot,y)-f}_{L^\infty}\le3My^\alpha.
\]
Since $u=\Pcal[f]$, Lemma \ref{lem:inverse} gives
\[
 [f]_{C^{0,\alpha}}\le C_{d,\alpha}M.
\]
Lemma \ref{lem:extension} then yields
\[
 [u]_{\alpha,\Hh^n}\le C_{d,\alpha}M.
\]
\end{proof}

Theorems \ref{thm:A} and \ref{thm:B} use the equality $C^{0,\alpha}=B_{\infty,\infty}^{\alpha}$ for noninteger $\alpha$.  At $\alpha=1$ the corresponding space is larger.  The next construction shows that this functional-analytic distinction is realized by bounded hyperbolic harmonic functions.

\subsection{A lacunary counterexample at \texorpdfstring{$\alpha=1$}{alpha=1}}

\begin{proof}[Proof of Proposition \ref{prop:counterexample}]
Let
\begin{equation}\label{eq:lacunaryTrace}
 F(x)=\sum_{j=1}^\infty2^{-j}\sin(2^jx_1),
 \qquad x=(x_1,\ldots,x_d)\in\R^d.
\end{equation}
The series converges uniformly, so $F$ is bounded and continuous.  It is not Lipschitz.  Indeed, take $h_N=2^{-N}$.  Then
\[
 F(h_Ne_1)-F(0)
 =\sum_{j=1}^\infty2^{-j}\sin(2^{j-N}).
\]
For $1\le j\le N$, the arguments lie in $(0,1]$.  If
\[
 c_0=\min_{0<t\le1}\frac{\sin t}{t}>0,
\]
the first $N$ terms contribute at least $c_0N2^{-N}$.  The remaining tail has absolute value at most $\sum_{j>N}2^{-j}=2^{-N}$.  Therefore
\[
 \frac{|F(h_Ne_1)-F(0)|}{h_N}
 \ge c_0N-1\longrightarrow\infty.
\]

Define
\[
 U(x,y)=\sum_{j=1}^\infty
  2^{-j}m_d(2^jy)\sin(2^jx_1).
\]
By \eqref{eq:multiplier}, this is the hyperbolic Poisson extension of $F$.  The series converges uniformly on the closed half-space.  On every set $y\ge\varepsilon>0$, the Bessel decay makes all derivatives converge uniformly, so each term may be differentiated and $\Delta_hU=0$.

Differentiating in $y$ gives
\[
 \partial_yU(x,y)
 =\sum_{j=1}^\infty m_d'(2^jy)\sin(2^jx_1).
\]
The cancellation of the factors $2^{-j}$ and $2^j$ is the critical feature.  By \eqref{eq:dyadicBessel},
\[
 \sup_{(x,y)\in\Hh^n}|\partial_yU(x,y)|\le C_d.
\]
The mean value theorem therefore gives
\[
 |U(x,s)-U(x,t)|\le C_d|s-t|.
\]
If $U$ were globally Lipschitz on the closure, its boundary trace $F$ would be Lipschitz, which is false.  This proves the first assertion.

For the modulus example, \eqref{eq:dyadicApprox} yields
\[
 \norm{U(\cdot,y)-F}_{L^\infty}
 \le\sum_{j=1}^\infty2^{-j}|1-m_d(2^jy)|
 \le C_dy.
\]
Choose $A>1+\norm{F}_{L^\infty}$ and set
\[
 V=A+U,
 \qquad G=A+F.
\]
Since $|U|\le\sum_{j=1}^\infty2^{-j}=1$, both $V$ and $G$ are positive.  Hence
\[
 \bigl||V(x,y)|-|G(x)|\bigr|
 =|U(x,y)-F(x)|\le C_dy.
\]
The trace $G$ is not Lipschitz, so $V$ is not globally Lipschitz.
\end{proof}

\begin{remark}
The trace \eqref{eq:lacunaryTrace} is a borderline Zygmund function.  Its dyadic pieces satisfy
\[
 2^j\norm{\Delta_jF}_{L^\infty}\asymp1.
\]
Thus $F\in B_{\infty,\infty}^{1}$, whereas
\[
 \sum_{j\ge1}2^j\norm{\Delta_jF}_{L^\infty}=\infty,
\]
so $F\notin B_{\infty,1}^{1}$.  Vertical Lipschitz continuity controls every scale separately.  It does not sum them.
\end{remark}

The counterexample identifies the missing endpoint input.  We now impose exactly the scale summability encoded by Lemma \ref{lem:approxBesov}.  The first endpoint theorem concerns actual vector differences; the second uses the zero-crossing argument once more.

\subsection{Endpoint recovery under Dini--Zygmund summability}

\begin{proof}[Proof of Theorem \ref{thm:endpointA}]
The vertical Lipschitz condition gives a uniform boundary trace exactly as in the proof of Theorem \ref{thm:A}.  Since $u=\Pcal[f]$, Lemma \ref{lem:approxBesov} and \eqref{eq:intrinsicA} imply
\[
 \norm{f}_{L^\infty}+\Dcal(f)
 \le C_d\left(
   \norm{f}_{L^\infty}
   +\int_0^1\frac{E_u(t)}{t^2}\dd t
 \right).
\]
Lemma \ref{lem:DZLip} makes $f$ Lipschitz.  The endpoint case of Lemma \ref{lem:extension} then gives the claimed global Lipschitz estimate for $u$.
\end{proof}

For the modulus theorem we need a uniform version of Lemma \ref{lem:sign}.  The monotone envelope in the statement was introduced precisely for this purpose.

\begin{proof}[Proof of Theorem \ref{thm:endpointB}]
For $0<y\le1$, apply the sign argument of Lemma \ref{lem:sign} with
\[
 M y
 \quad\text{replaced by}\quad
 E_{|u|}^{*}(y).
\]
If $u(x,0)$ and $u(x,y)$ have opposite signs, a zero occurs at some height $\tau<y$, and both the boundary value and the value at height $y$ are controlled by the same envelope.  We obtain
\begin{equation}\label{eq:endpointSign}
 \norm{u(\cdot,y)-f}_{L^\infty}
 \le3E_{|u|}^{*}(y).
\end{equation}
Combining \eqref{eq:endpointSign} with \eqref{eq:intrinsicB} gives
\[
 \int_0^1
 \frac{\norm{u(\cdot,y)-f}_{L^\infty}}{y^2}\dd y<\infty.
\]
Lemma \ref{lem:approxBesov} yields $\Dcal(f)<\infty$.  Lemma \ref{lem:DZLip} makes $f$ Lipschitz, and Lemma \ref{lem:extension} makes $u=\Pcal[f]$ globally Lipschitz.
\end{proof}

The boundary formulation uses one last elementary fact.  It is special to real-valued functions and explains why a condition on $|f|$ suffices in Corollary \ref{cor:boundaryDZ}.

\begin{lemma}\label{lem:modulusLip}
Let $g:\R^d\to\R$ be continuous.  If $|g|$ is $L$-Lipschitz, then $g$ is $L$-Lipschitz.
\end{lemma}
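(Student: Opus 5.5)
The plan is to reuse the zero-crossing argument of Lemma \ref{lem:sign}, now along the straight segment joining two points. Fix $x,x'\in\R^d$ and consider the continuous function $v(t)=g\bigl(x+t(x'-x)\bigr)$ on $[0,1]$. We want to show
\[
 |g(x)-g(x')|\le L|x-x'|.
\]

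We split according to the signs of $g(x)$ and $g(x')$.

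\emph{Same sign} ($g(x)g(x')\ge0$). Here $|g(x)-g(x')|=\bigl||g(x)|-|g(x')|\bigr|$, and this is at most $L|x-x'|$ by hypothesis.

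\emph{Opposite signs} ($g(x)g(x')<0$). By the intermediate value theorem applied to $v$ there is $\tau\in(0,1)$ with $g(z)=0$, where $z=x+\tau(x'-x)$ lies on the segment. Then
\[
 |g(x)|=\bigl||g(x)|-|g(z)|\bigr|\le L|x-z|,
 \qquad
 |g(x')|\le L|x'-z|.
\]
Since $g(x)$ and $g(x')$ have opposite signs,
\[
 |g(x)-g(x')|=|g(x)|+|g(x')|\le L\bigl(|x-z|+|z-x'|\bigr)=L|x-x'|,
\]
because $z$ lies on the segment and so the two distances add exactly.

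There is no real obstacle here. The one point needing care is to place the zero on the segment itself, so that the distances add with no loss and the constant is $L$ rather than $3L$ as in Lemma \ref{lem:sign}. Continuity of $g$ along the segment is all that is needed for this.
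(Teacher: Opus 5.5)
Your proof is correct and follows the paper's argument exactly: the same-sign case is immediate, and in the opposite-sign case a zero $z$ on the segment gives $|g(x)-g(x')|=|g(x)|+|g(x')|\le L(|x-z|+|z-x'|)=L|x-x'|$.
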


\begin{proof}
If $g(x)g(y)\ge0$, the assertion is immediate.  If the signs are opposite, the segment from $x$ to $y$ contains a point $z$ with $g(z)=0$.  Therefore
\[
 |g(x)-g(y)|
 =|g(x)|+|g(y)|
 \le L|x-z|+L|y-z|
 =L|x-y|.
\]
\end{proof}

\begin{proof}[Proof of Corollary \ref{cor:boundaryDZ}]
If $\Dcal(f)<\infty$, Lemmas \ref{lem:DZLip} and \ref{lem:extension} prove part \textup{(i)}.  If $\Dcal(|f|)<\infty$, Lemma \ref{lem:DZLip} makes $|f|$ Lipschitz.  Lemma \ref{lem:modulusLip} then makes $f$ Lipschitz, and Lemma \ref{lem:extension} proves part \textup{(ii)}.
\end{proof}

The added endpoint condition is intentionally stronger than Lipschitz regularity of an arbitrary trace.  It is a scale-critical sufficient condition, not a characterization of all globally Lipschitz extensions.  A necessary-and-sufficient endpoint criterion expressed solely through vertical increments would have to distinguish $B_{\infty,1}^{1}$ from $B_{\infty,\infty}^{1}$ without assuming the answer at the boundary.  This remains open.

\section*{Acknowledgements}
This work was supported by the National Natural Science Foundation of China, grant 12271189.

\end{document}